\numberwithin{equation}{section}
\theoremstyle{definition}
\newtheorem{definition}{Definition}[section]
\newtheorem{example}[definition]{Example}
\theoremstyle{remark}
\newtheorem{remark}[definition]{Remark}
\theoremstyle{plain}
\newtheorem{theorem}[definition]{Theorem}
\newtheorem{lemma}[definition]{Lemma}
\newtheorem{proposition}[definition]{Proposition}
\newtheorem{result}[definition]{Result}
\newtheorem{corollary}[definition]{Corollary}
\newtheorem{conjecture}[definition]{Conjecture}
\newcommand{\eps}{\varepsilon}
\newcommand{\al}{\alpha}
\newcommand{\natu}{\mathbb{N}}
\newcommand{\vphi}{\varphi}
\newcommand{\om}{\omega}
\newcommand{\Om}{\Omega}
\newcommand{\bdy}{\partial}
\newcommand{\smoo}{\mathcal{C}}
\newcommand{\hol}{\mathcal{O}}
\newcommand{\rl}{{\sf Re}}
\newcommand{\impl}{\Longrightarrow}
\newcommand\wtil[1]{\widetilde{#1}}
\newcommand{\CC}{\mathbb{C}^2}
\newcommand{\cplx}{\mathbb{C}}
\newcommand{\rea}{\mathbb{R}}
\newcommand{\cn}{\mathbb{C}^n}
\begin{document}
	\title[Approximations on certain domains in $\mathbb{C}^n$ ]{Approximations on certain domains of $\mathbb{C}^{n}$}
	\author{Sanjoy Chatterjee and Sushil Gorai}
	\address{Indian Statistical Institute, Kolkata}
	\email{ramvivsar@gmail.com}
	
	\address{Department of Mathematics and Statistics, Indian Institute of Science Education and Research Kolkata,
		Mohanpur -- 741 246}
	\email{sushil.gorai@iiserkol.ac.in, sushil.gorai@gmail.com}
	%\thanks{Sanjoy Chatterjee is supported by CSIR fellowship (File No-09/921(0283)/2019-EMR-I). Sushil Gorai is partially supported by a Matrics Research Grant (MTR/2017/000974) and a Core Research Grant (CRG/2022/003560) of SERB}
	\keywords{Approximation of biholomorphism,  Automorphisms of $\mathbb{C}^n$, Runge domains, positive time flow invariant domains, Loewner chain}
	\subjclass[2020]{Primary: 32M17, 32E30, 30C45 }

	\date{\today}

	%\begin{comment}

	\begin{abstract}
		In this paper, we study the domains in $\mathbb{C}^n$ that are invariant under the positive flows of some globally defined, complete holomorphic vector field 
  with a globally attracting fixed point at the origin. Our first result says that such a domain $\Omega$ is always Runge. Next, with an additional assumption on the rate of convergence of the flow, we show that 
   any biholomorphism $\Phi\colon \Omega \to \Phi(\Omega)$, with $\Phi(\Omega)$ is Runge, can be approximated by automorphisms of $\mathbb{C}^{n}$ uniformly on compacts. This generalizes all earlier known theorems in this direction substantially, even when the vector field is linear.  As an application of our approximation results, on such domains that are also complete hyperbolic, we show that any Loewner PDE in a complete hyperbolic domain $\Omega$ admits an essentially unique univalent solution with values in $\mathbb{C}^n$. We also provide  an approximation result for volume preserving  biholomorphisms on above domains. 
   We provide several examples of such domains.
	\end{abstract}	
	%\end{comment}

	\maketitle
	
	%\tableofcontents
	
	\section{Introduction}\label{Intro}

A domain $\Om  \subseteq \cn$ is said to be a Runge domain if every holomorphic function $f: \Om \to \mathbb{C}$ can be approximated by holomorphic polynomials uniformly on every compact subset of $\Om$. Runge domains are fundamental in the study of function theory in several complex variables. In one variable, they are the simply connected domains in $\cplx$, hence, a topological property classifies them. In higher dimensions, no such characterization exists. Characterizing the Runge domain in higher dimensions is a very difficult problem. In this paper,  we give a class of domains that are Runge.  A {\em holomorphic vector field} $V$ on a domain  $\Om \subset \mathbb{C}^{n}$ is a real vector field on $\Om$  such that  $$V(z)=\sum_{j=1}^{n} a_{j}(z) \frac{\partial }{\partial  x_{j}}+b_{j}(z) \frac{\partial }{\partial  y_{j}}$$ such that $(a_j+ib_{j})$ is holomorphic function on $\Om$  for all $j \in \{1,2, \cdots ,n\}$,  $z_{j}=x_{j}+iy_{j}$ are coordinate of $\mathbb{C}^{n}$. We denote the set of all holomorphic vector fields on $\cn$ by $\mathfrak{X}_{\mathcal{O}}(\cn)$. 
We now state our first theorem.
\begin{theorem}\label{thm-spirallikeRunge1}
		Let $F$ be a complete holomorphic vector field with a globally attracting fixed point at the origin. Assume that $\Om $ is domain which is invariant under positive time flows of $F$ and contains the origin. Then $\Omega$ is a Runge domain.
	\end{theorem}

\noindent It is known that any star shaped domain in $\cn$ is Runge (see \cite{Kasimi}). Any star shaped domain with respect to the origin is invariant under the positive time flows of the vector field $-I$. In \cite[Theorem-3.1]{Hamada15}, Hamada proved the same when the domain is invariant under positive time flows of certain linear vector fields. Hamada calls those domains as spirallike domains. A detailed discussion about such domains are given in \Cref{sec-prelims}.
\medskip

	The main question that we study in this paper is: {\em For what classes of domains $\Omega$ any biholomorphic map from $\Omega$ into $\cn$ can be approximated by automorphisms of $\cn$?} 
  Thanks to the Anders\'{e}n-Lempert theory, a positive answer of this question will lead to approximating any biholomorphic map by simpler classes of maps, namely the finite composition of shears and overshears. 
 In this paper, we give a large class of domains for which such approximation holds. 
 To proceed further in our discussion, a brief description of the automorphism group of $\cn$ will be useful.
 Let Aut$(\cn)$ be set of all holomorphic mappings $f:\cn \to \cn$ such that $f^{-1}$ exist and also holomorphic. For $n=1$, we know that Aut$(\mathbb{C})$ consists only of affine mappings $z\mapsto az+b,(a,b \in \mathbb{C},\; a\neq 0)$. However, Aut$(\cn)$ is huge for $n \geq 2$. Writing the standard coordinates as $(z_{1},z_{2}, \cdots z_{n})=(z',z_{n})$ on $\cn$ we see that Aut$(\cn)$ contains mappings of the form
	\begin{equation}\label{eq-shear}
		(z',z_n) \mapsto (z', z_{n}+f(z'))
	\end{equation}
	\begin{equation}\label{eq-overshear}
		(z',z_n) \mapsto (z', e^{f(z')}z_{n}), 
	\end{equation}
	where $f:\mathbb{C}^{n-1} \to \mathbb{C}$ is any holomorphic mapping. The inverse of the above mapping is obtained by replacing $f$ with $-f$ in \eqref{eq-shear} and \eqref{eq-overshear}. The mappings of forms \eqref{eq-shear} and their $SL(n,\mathbb{C})$ conjugates are called {\em shears or additive shears}, and the mappings \eqref{eq-overshear} and their $GL(n, \mathbb{C})$ conjugates are called {\em overshears or multiplicative shears}. The subgroup generated by elements of overshears and shears is denoted by $S(n)$, $S_{1}(n)$, respectively. The set of all volume preserving automorphisms of $\cn $ is denoted by $\text{Aut}_{1}(\cn)$ and is defined by $\text{Aut}_{1}(\cn):=\{F\in \text{Aut}(\cn): \text{det}(DF(z)) \equiv 1\}$. Our next result is about approximating biholomorphic maps on certain domains in $\cn$ which are invariant under the positive time flow of certain holomorphic vector field  by compositions of shears and overshears, special members of the $\text{Aut}(\cn)$. Before starting our main result let us survey a few things from the existing literature.
The  study of Aut$(\cn)$ was initiated by Rosay and Rudin \cite{RosayRudin}. Questions about the density of the compositions of shears in the class of all volume preserving automorphisms of $\cn$  and the density of compositions of
	overshears in Aut$(\cn)$ were asked in \cite{RosayRudin}, which were answered positively by Anders\'{e}n \cite{Andersen} and by Anders\'{e}n and Lempert \cite{AndersenLemp} respectively, giving birth of the now-famous Anders\'{e}n-Lempert theory \cites{Andersen, AndersenLemp} (see \cite{Forstbook} for more details).
	It is a natural question to ask for characterization of the domains $\Omega\subset\cn$ for which any biholomorphism from $\Om$ into $\cn$ can be approximated 
	uniformly on compacts of $\Om$ by automorphisms of $\cplx^n$, i.e. in view of Anders\'{e}n-Lempert theory by composition of Overshears. Anders\'{e}n and Lempert \cite{AndersenLemp} considered this question and proved:

 \begin{result}\cite[Theorem 2.1]{AndersenLemp}\label{thm-AndersenLemp}
    Let $\Om$ be a starshaped domain $\cplx^n$ and $\Phi:\Om\to\cn$ be a biholomorphic map such that $\Phi(\Om)$ is Runge. Then $\Phi$ can be approximated uniformly on compact subsets by compositions of overshears. If $\det D\Phi\equiv 1$, then $\Phi$ can be approximated uniformly on compact subsets by the composition of shears.
 \end{result}
 \noindent
 Note that, if a biholomorphism $\Phi:\Om\to\Phi(\Om)$, with $\Phi(\Om)$ Runge, can be approximated uniformly on compacts by elements of Aut$({\cn})$, then in view of the density theorem due to  Anders\'{e}n and Lempert \cite[Theorem~1.3]{AndersenLemp}, we get that the biholomorphism $\Phi$ can be approximated by the composition of overshears.
	
In a seminal paper Forstneri\v{c} and Rosay \cite[Theorem-1.1]{FR1993}, \cite[Theorem-1.1]{Forst1994} gave a sufficient condition for Runge domain under which any biholomorphic map can be approximated by automorphisms of $\cn$.

 \begin{result}[Forstneri\v{c}-Rosay]\label{res-Forstneric-Rosay}
Let $\Om \subset \mathbb{C}^{n}$ be a Runge domain. Assume that $H: [0,1] \times \Om \to \mathbb{C}^{n}$ is a mapping of class $\smoo^{p}(0 \leq p < \infty)$ such that for each $t \in [0,1]$, $H_{t}=H(t, \cdot)\colon \Om \to \mathbb{C}^{n}$ is a biholomorphic mapping onto a Runge domain $\Om_{t}:=H_{t}(\Om) \subset \mathbb{C}^{n}$ and the map $H_{0}$ can be approximated uniformly on every compact subset of $\Om$ by automorphisms of $\mathbb{C}^{n}$. Then 

\begin{itemize}
    \item [(i)]
  for each compact subset $K \subset \Om$ and each $\epsilon>0$ there exist a smooth map $\Psi: [0,1] \times \cn \to \mathbb{C}^{n} $ 
  such that  $\Psi_{t}:=\Psi(t,\cdot) \in \text{Aut}(\cn)$, for every $t \in [0,1]$ 
  and $\|H-\Psi\|_{\smoo^{p}([0,1] \times K)}<\epsilon$,
  \item [(ii)]
  if  we have $H_{0} \in Aut(\cn)$, then we can choose $\Psi$ such that $\Psi_{0}=H_{0}$,
  \item 
  [(iii)]
  if  $\Om$ is pseudoconvex, $H^{(n-1)}(\Om, \cplx)=0$, $\text{det}DH_{t}(z) \equiv 1$ on $\Om$ and  $H_{0}$ can be approximated on $\Om$ by elements of $\text{Aut}_{1}(\cn)$,
 then every $H_{t}$ can be approximated on $\Om$ by elements of $\text{Aut}_{1}(\cn)$.
\end{itemize}
\end{result}
\noindent
This result by Forstneri\v{c} and Rosay is the most general result in this direction. The smoothness assumption of the family is lowered in \cite{Forst1994} and \cite{FKu}. But, for a given domain,
constructing such an isotopy $\{\Phi_t\}$ in \Cref{res-Forstneric-Rosay} is often difficult.
Therefore, it is interesting to know the classes of domains to which the above result can be applied.
Hamada \cite{Hamada15} gave a class of domains in the following result.
	\begin{result} \cite[Theorem 4.2]{Hamada15}\label{thm-hamada2}
		Let $\Om$ be a domain containing the origin that satisfies the following conditions: 
		$e^{-tA}w\in\Om$ for all $w\in\Om$, for all $t \geq 0$ where $A\in M_n(\cplx)$ such that $\inf_{||z||=1}\rl\langle Az, z\rangle>0$. 
		If $\max_{\lambda\in\sigma(A)}\rl(\lambda)<2 \inf_{||z||=1}\rl\langle Az, z\rangle$, where $\sigma(A)$ is the spectrum of $A$, then any biholomorphism $\phi:\Om\to\cn$ with $\phi(\Om)$ Runge, can be approximated uniformly on compacts of $\Om$ by automorphisms of $\cn$.
	\end{result} 
 In this paper, we focus on non-linear complete holomorphic vector fields with a globally attracting fixed point at the origin (see \Cref{sec-prelims} and the domains that remains invariant under positive time flows of such vector fields. We show under a suitable growth condition on the flows the approximation result holds.
To state it precisely, we need the following: 

Let $\sigma(A)$ denote the spectrum of the matrix $A$. Let $k_{+}(A):=\max \{\rl \lambda : \lambda \in \sigma (A)\}$ and $ k_{-}(A):=\min\{\rl \lambda : \lambda \in \sigma (A)\}$. We now present the main result of this article:
\begin{theorem}\label{thm-apprxges}
		Let $V \in \mathfrak{X}_{\mathcal{O}}{(\mathbb{C}^{n})}$, $n \geq 2$, be a complete holomorphic  vector field with a globally attracting fixed point at the origin and $DV(0)=A$. Assume that $ \Om  \subseteq \cn$ contains the origin and is invariant under the positive time flows of $V$. Suppose that  there exist $\gamma, \alpha>0$ such that 
  \begin{itemize}
 \item [(i)] $\|X(t,z)\|\leq \gamma e^{-\alpha t}\|z\|$ for all $t \geq 0$ and for all $z \in \Om$, where $X(t,z)$ is the flow of  the vector field $V$.
 \item [(ii)] \begin{align}\label{C:conditi1}
2\alpha+k_{-}(A)&>0.
\end{align}	
\end{itemize}
 Then any biholomorphism $\Phi: \Om \to \Phi(\Om) \subseteq \cn$ with $\Phi(\Om)$ Runge can be approximated by automorphisms of $\mathbb{C}^{n}$  uniformly on every compact subset of $\Omega$.
 \end{theorem}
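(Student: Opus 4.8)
The plan is to reduce the statement to the Forstneri\v{c}--Rosay theorem (\Cref{res-Forstneric-Rosay}). That result requires $\Om$ to be Runge --- which is precisely \Cref{thm-spirallikeRunge1}, since $V$ is complete, globally asymptotically stable and $V(0)=0$ --- together with a continuous isotopy $\{H_t\}_{t\in[0,1]}$ of biholomorphisms of $\Om$ onto Runge subdomains of $\cn$ with $H_1=\Phi$ and $H_0$ approximable by $\mathrm{Aut}(\cn)$; it then produces automorphisms $\Psi_t$ with each $H_t$ approximated by $\Psi_t$ uniformly on compacts, so in particular $\Phi=H_1$ is. After composing with a translation we may assume $\Phi(0)=0$, and composing throughout with the linear automorphism $D\Phi(0)^{-1}$ changes nothing, so the whole task is to build such an isotopy.

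Everything hinges on linearizing the flow at the origin. Write $V(z)=Az+R(z)$ with $R$ holomorphic, $R(0)=0$, $DR(0)=0$. Then $\frac{d}{dt}\!\left(e^{-tA}X(t,z)\right)=e^{-tA}R(X(t,z))$, hence $e^{-tA}X(t,z)=z+\int_0^te^{-sA}R(X(s,z))\,ds$. For $z$ in a compact $K\subset\Om$ the hypothesis $\|X(s,z)\|\le\gamma e^{-\alpha s}\|z\|$ keeps $X(s,z)$ in a fixed ball about $0$ once $s\ge s_0(K)$, so $\|R(X(s,z))\|\lesssim\|X(s,z)\|^2\lesssim e^{-2\alpha s}$; together with $\|e^{-sA}\|\le C_\varepsilon e^{(\varepsilon-k_-(A))s}$ and a small choice of $\varepsilon$, condition \eqref{C:conditi1} makes the integrand $O\!\left(e^{(\varepsilon-k_-(A)-2\alpha)s}\right)$ and hence integrable. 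Therefore $\ell(z):=\lim_{t\to\infty}e^{-tA}X(t,z)=z+\int_0^\infty e^{-sA}R(X(s,z))\,ds$ exists, uniformly on compacts of $\Om$, so $\ell\in\mathcal O(\Om,\cn)$ with $D\ell(0)=I$, and one reads off the semiconjugacy $\ell\circ X(t,\cdot)=e^{tA}\circ\ell$; since $X(t,\cdot)$ carries compacts of $\Om$ into arbitrarily small neighbourhoods of $0$, where $\ell$ is a local biholomorphism, $\ell$ is injective on $\Om$, hence a biholomorphism onto the open set $\ell(\Om)$. (The rate hypothesis also forces $\|e^{tA}\|\le\gamma e^{-\alpha t}$, i.e. $k_+(A)\le-\alpha<0$, so $z\mapsto Az$ is a complete, globally asymptotically stable field with equilibrium $0$, and $e^{tA}\ell(\Om)=\ell(X(t,\Om))\subseteq\ell(\Om)$ for $t\ge0$; thus $\ell(\Om)$ is spirallike with respect to it and Runge by \Cref{thm-spirallikeRunge1}.) The crucial observation is that each map $e^{-tA}\circ X(t,\cdot)$ lies in $\mathrm{Aut}(\cn)$ --- a linear map composed with the time-$t$ flow of the complete vector field $V$ --- so $\ell$, and a fortiori $D\Phi(0)\circ\ell$, is a limit, uniform on compacts of $\Om$, of automorphisms of $\cn$.

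Now pick any continuous decreasing surjection $s\colon(0,1]\to[0,\infty)$ with $s(1)=0$ and set
\[
H_t:=D\Phi(0)\,e^{-s(t)A}\,D\Phi(0)^{-1}\circ\Phi\circ X\!\bigl(s(t),\cdot\bigr)\quad(t\in(0,1]),\qquad H_0:=D\Phi(0)\circ\ell .
\]
Then $H_1=\Phi$, and inserting the Taylor expansion $\Phi(w)=D\Phi(0)w+O(\|w\|^2)$ at $0$ into the formula for $\ell$ gives $H_t\to H_0$ uniformly on compacts of $\Om$ as $t\to0^+$, so $H$ is continuous on $[0,1]\times\Om$. Each $H_t$ is a composition of the automorphism $X(s(t),\cdot)$ of $\cn$, the biholomorphism $\Phi$, and a linear automorphism, hence a biholomorphism onto $H_t(\Om)$; it remains to check that these images are Runge. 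For $t\in(0,1]$ the domain $X(s(t),\Om)$ is again spirallike with respect to $V$ and contains $0$, so it is Runge in $\cn$ by \Cref{thm-spirallikeRunge1}; being contained in the Runge domain $\Om$ it is Runge in $\Om$ as well (polynomials, dense in $\mathcal O(X(s(t),\Om))$, lie in $\mathcal O(\Om)|_{X(s(t),\Om)}$); applying the biholomorphism $\Phi$, the pair $(\Phi(X(s(t),\Om)),\Phi(\Om))$ is Runge; and since $\Phi(\Om)$ is Runge in $\cn$, transitivity of Runge pairs shows $\Phi(X(s(t),\Om))$, hence its linear image $H_t(\Om)$, is Runge in $\cn$. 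Finally $H_0(\Om)=D\Phi(0)\,\ell(\Om)$ is Runge by the parenthetical remark above. Thus \Cref{res-Forstneric-Rosay} applies (with $p=0$ and $H_0$ approximable by $\mathrm{Aut}(\cn)$), and $\Phi=H_1$ is approximated on compacts of $\Om$ by automorphisms of $\cn$.

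The single genuine difficulty is the construction of the linearization $\ell$ and the proof that the convergence $e^{-tA}X(t,\cdot)\to\ell$ is uniform on compacts of $\Om$ --- this is exactly where the exponential rate $\|X(t,z)\|\le\gamma e^{-\alpha t}\|z\|$ and the inequality \eqref{C:conditi1} are used, through the integrability of $\|e^{-sA}\|\,\|R(X(s,z))\|$. The rest is soft: once one notices that the partial linearizations $e^{-tA}\circ X(t,\cdot)$ are genuine automorphisms of $\cn$, the endpoint $H_0=D\Phi(0)\circ\ell$ of the isotopy is approximable by $\mathrm{Aut}(\cn)$ for free, and Forstneri\v{c}--Rosay does the rest.
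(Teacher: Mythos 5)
Your proof is correct and follows essentially the same strategy as the paper: construct the ``linearization at infinity'' $\ell=\lim_{t\to\infty}e^{-tA}\circ X_t$, use it as the endpoint $H_0$ of an isotopy joining $\Phi$ to a map that is manifestly a locally uniform limit of $\mathrm{Aut}(\cn)$ (each $e^{-tA}\circ X_t$ being an automorphism), and apply the Forstneri\v{c}--Rosay theorem. The individual steps are carried out somewhat differently, though, and in a few places more cleanly. First, you prove that $e^{-tA}X(t,\cdot)$ converges by integrating the identity $\tfrac{d}{dt}\bigl(e^{-tA}X(t,z)\bigr)=e^{-tA}R(X(t,z))$, where the integrand is directly seen to be absolutely integrable thanks to \eqref{C:conditi1}; the paper instead bounds $\|f_t-f_s\|$ by a mean value estimate of the form $M'e^{-Cs}|t-s|$ (its Lemma~\ref{L:composconver}), which is less transparent for establishing the Cauchy property when $|t-s|$ is unbounded --- your integral argument is the cleaner way to say what the paper intends. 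Second, to show $\ell$ is injective you exploit the semiconjugacy $\ell\circ X(t,\cdot)=e^{tA}\circ\ell$ together with the fact that $X(t,\cdot)$ compresses $\Om$ into a neighbourhood of $0$ where $\ell$ is a local biholomorphism (since $D\ell(0)=I$); the paper instead invokes Lloyd's several--variable Rouch\'e theorem (\Cref{R:Lloyd}) to pass injectivity from the approximating automorphisms to the limit. Your dynamical argument is a genuine (and neater) alternative. Third, you invoke \Cref{res-Forstneric-Rosay} with $p=0$, needing only continuity of $H$ on $[0,1]\times\Om$; the paper goes to the trouble of proving $H$ is $\smoo^1$ (its Steps II--III), which is unnecessary for the conclusion as stated. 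Finally, your Runge argument for $H_t(\Om)$ via transitivity of Runge pairs is equivalent to the paper's \Cref{lem-Runge-composition}. In short: same skeleton, but your treatment of uniform convergence and of injectivity of the linearizer is more elementary, and you correctly observe that $\smoo^0$ regularity suffices.
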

\begin{remark}
    Conditions $(i)$ and $(ii)$ of \Cref{thm-apprxges} gives a uniform exponential decay which, in turn, provides the asymptotic behavior of the flows of $V$ as  exponential type up to a biholomorphism of $\Om$ (see \Cref{L:composconver}). This plays a vital role in our proof.
\end{remark}
  We obtain the following corollary from \Cref{thm-apprxges}, restricting the vector field to be linear which already gives a substantial generalization to \Cref{thm-hamada2}.

	\begin{corollary}\label{thm-linear case}
	Let $A\in GL(n,\cplx)$ with $2k_{+}(A)<k_{-}(A)$ and $ \Om \subset \cn$ be a domain containing the origin and  $P^{-1}e^{tA}Pw\in\Om$ for all $w\in\Om$ for some $P\in GL(n,\cplx)$. Then any biholomorphism  $\Phi \colon \Om \to \Phi(\Om)$ with $\Phi(\Om)$ Runge can be approximated by elements of $Aut(\cn)$ uniformly on every compact subset of $\Om$.
	 \end{corollary}

We obtain the following corollary of the \Cref{thm-apprxges}.
\begin{corollary}\label{thm-pro}
Let $\Om_{1}\subset \cn$, $\Om_{2} \subset \mathbb{C}^{m} $ be two domains containing the origin and are invariant under positive time flow of complete vector fields  $V_{1} \in \mathfrak{X}_{\mathcal{O}}(\cn)$ and $V_{2} \in \mathfrak{X}_{\mathcal{O}}(\mathbb{C}^{m})$ respectively. Both the vector fields has a globally attracting fixed point at the origin.
Suppose that $X_{1}(t,z)$, $X_{2}(t,w)$ denotes the flow of the vector field $V_{1}$, $V_{2}$. Assume that there exists $\gamma_{1}, \gamma_{2}, \alpha_{1}, \alpha_{2}>0$ such that $\|X_{j}(t,z_{j})\|\leq \gamma_{j}e^{-\alpha_{j}t}\|z_{j}\|$, for all $t\geq 0$, for all $z_{j} \in \Omega_{j}$ for $j \in \{1,2\}$.  Suppose that $$\min\{\alpha_{1}, \alpha_{2}\}>\max\{\frac{-k_{-}(A)}{2}, \frac{-k_{-}(B)}{2}\}.$$ Then, any biholomorphism $\Phi \colon \Om_{1} \times \Om_{2} \to \Phi(\Om_{1}\times \Om_{2}) \subseteq \mathbb{C}^{n} \times \mathbb{C}^{m}$ with $\Phi(\Om_{1}\times \Om_{2})$ Runge can be approximated by elements of $\text{Aut} ( \mathbb{C}^{m+n})$. 
	\end{corollary}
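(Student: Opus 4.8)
The plan is to realize the product $\Om_1\times\Om_2$ as a single spirallike domain in $\mathbb{C}^{n+m}$ and then apply \Cref{thm-apprxges} verbatim. Define the product vector field $W\in\mathfrak{X}_{\mathcal{O}}(\mathbb{C}^{n+m})$ by $W(z,w):=(V_1(z),V_2(w))$ for $(z,w)\in\mathbb{C}^n\times\mathbb{C}^m$. Then $W$ is holomorphic, $W(0,0)=0$, and its flow is the product flow $X(t,(z,w))=(X_1(t,z),X_2(t,w))$; in particular $W$ is complete. Since each $X_j(t,z_j)\to 0$ as $t\to\infty$ (this is part of spirallikeness, equivalently follows from global asymptotic stability of $V_j$), the flow of $W$ converges to the origin, so $W$ is globally asymptotically stable. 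Finally $DW(0,0)$ is the block-diagonal matrix $\operatorname{diag}(A,B)$ with $A=DV_1(0)$ and $B=DV_2(0)$, hence $\sigma(DW(0,0))=\sigma(A)\cup\sigma(B)$ and consequently $k_{-}(DW(0,0))=\min\{k_{-}(A),k_{-}(B)\}$.

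Next I would check that $\Om_1\times\Om_2$ (which contains the origin since both factors do) is spirallike with respect to $W$ and that $W$ satisfies the decay hypothesis of \Cref{thm-apprxges} on it. Forward invariance is immediate: if $(z,w)\in\Om_1\times\Om_2$ then $X_1(t,z)\in\Om_1$ and $X_2(t,w)\in\Om_2$ for all $t\geq 0$, so $X(t,(z,w))\in\Om_1\times\Om_2$. For the rate, using $\|(u,v)\|^2=\|u\|^2+\|v\|^2$ together with the two given bounds,
\begin{align*}
\|X(t,(z,w))\|^2 &= \|X_1(t,z)\|^2+\|X_2(t,w)\|^2\\
&\leq \gamma_1^2 e^{-2\alpha_1 t}\|z\|^2+\gamma_2^2 e^{-2\alpha_2 t}\|w\|^2\\
&\leq \max\{\gamma_1,\gamma_2\}^2\, e^{-2\min\{\alpha_1,\alpha_2\}t}\,\|(z,w)\|^2,
\end{align*}
so $\|X(t,(z,w))\|\leq \gamma e^{-\alpha t}\|(z,w)\|$ for all $t\geq 0$ and all $(z,w)\in\Om_1\times\Om_2$, where $\gamma:=\max\{\gamma_1,\gamma_2\}$ and $\alpha:=\min\{\alpha_1,\alpha_2\}$.

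It then only remains to verify condition \eqref{C:conditi1} for the pair $(W,\Om_1\times\Om_2)$. By the two computations above,
\[
2\alpha+k_{-}(DW(0,0))=2\min\{\alpha_1,\alpha_2\}+\min\{k_{-}(A),k_{-}(B)\},
\]
which is strictly positive precisely when $\min\{\alpha_1,\alpha_2\}>\max\{-k_{-}(A)/2,\,-k_{-}(B)/2\}$, i.e.\ exactly the standing hypothesis of the corollary. Since $\Phi(\Om_1\times\Om_2)$ is assumed Runge, all the hypotheses of \Cref{thm-apprxges} hold for the biholomorphism $\Phi$ on the spirallike domain $\Om_1\times\Om_2\subseteq\mathbb{C}^{n+m}$, and we conclude that $\Phi$ can be approximated uniformly on compacts by elements of $\text{Aut}(\mathbb{C}^{n+m})$. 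The only place needing any care is the bookkeeping that a product of spirallike domains is spirallike with linearization the block sum and decay rate the minimum; once this is recorded, the statement is an immediate application of \Cref{thm-apprxges}, so I do not expect a genuine obstacle.
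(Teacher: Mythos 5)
Your proposal is correct and follows the same route as the paper: form the product vector field $W(z,w)=(V_1(z),V_2(w))$, observe that its flow is the product flow so that $\Om_1\times\Om_2$ is spirallike with respect to $W$, note $DW(0,0)=\operatorname{diag}(A,B)$ gives $k_{-}(DW(0,0))=\min\{k_{-}(A),k_{-}(B)\}$, bound the decay rate by $\alpha=\min\{\alpha_1,\alpha_2\}$ and $\gamma=\max\{\gamma_1,\gamma_2\}$, and invoke \Cref{thm-apprxges}. Your write-up is in fact slightly more explicit than the paper's (you record the completeness, global asymptotic stability, and the spectral computation for the block-diagonal linearization), but it is not a different argument.
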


\noindent Here we pose a conjecture
\begin{conjecture}
Conclusion of \Cref{thm-apprxges} holds true without condition (ii).
\end{conjecture}

 A  holomorphic symplectic  form $\omega$ on $\mathbb{C}^{2n}$ is closed holomorphic $2$-form on $\mathbb{C}^{2n}$ whose highest exterior power $\omega^{n}$ is nowhere vanishing. The standard holomorphic symplectic form on $\mathbb{C}^{2n}$ with coordinates $z_{1}, \cdots, z_{n},~w_{1}, \cdots   ,w_{n}$ is $ \omega= \sum_{j=1}^{n}\,dz_{i}\wedge \,dw_{i}$
	Let $\Om \subset\cn$ be a domain. A holomorphic map  $f: \Om \to  \cplx^{2n}$ is called symplectic if $f^{*}(\om)=\om$. A holomorphic vector field $V$ on $\Om$ is said to be symplectic if $\,d\iota_{V}(\om)=0$. An automorphism $F:\cplx^{2n} \to \cplx^{2n} $ is symplectic if $F^{*}(\om)=\om$. We denote the set of all symplectic holomorphism by $Aut_{\om}(\cn)$.
 
  We now present an approximation result where a
 volume preserving biholomorphisms and symplectic biholomorphism that are defined on a domain that is invariant under positive time flows of some vector field,  by volume preserving automorphism and symplectic automorphisms of $\cn$ respectively. 
 
 \begin{theorem}\label{thm-Volsympapprox}
 Let $V \in \mathfrak{X}_{\mathcal{O}}{(\mathbb{C}^{n})}$, $n \geq 2$, be a complete  vector field with a globally attracting fixed point at the origin and $DV(0)=A$. Assume that $ \Om \subseteq \mathbb{C}^{2n}$ is a pseudoconvex domain containing the origin and invariant under positive time flows of $V$. Suppose that  there exist $\gamma, \alpha>0$ such that $\|X(t,z)\|\leq \gamma e^{-\alpha t}\|z\|$ for all $t \geq 0$ and for all $z \in \Om$, where $X(t,z)$ is the flow of the vector field $V$. Assume that
$2\alpha+k_{-}(A)>0.$
 Let  $\Phi: \Om \to \Phi(\Om)$ be a biholomorphism  with $\Phi(\Om)$ Runge. Then the following holds:
 \begin{itemize}
 \item[(i)]
 If  $div_{\om^{n}}V(z)$ is non zero constant and  $\text{det}(D\Phi(z)) \equiv 1$, where $\om^{n}$ is  standard holomorphic $(n,0)$ form $\,dz_{1} \wedge \,dz_{2} \wedge \cdots \wedge \,dz_{n}$,  then $\Phi$ can be approximated by elements of $Aut_{1}(\cn)$  uniformly on every compact subset of  $\Om$.
 \item [(ii)]
 Let $n=2m$ and $(z_{1}, z_{2}, \ldots ,z_{m}, w_{1}, w_{2}, \ldots ,w_{m})$ be a coordinate of $\mathbb{C}^{n}$ and $\omega=\sum_{j=1}^{m}\,dz_{i} \wedge \,dw_{i}$ is a symplectic form on $\cn$. If $V$ is symplectic vector field and $\Phi^{*}(\omega)=\omega$, then $\Phi$ can be approximated by elements of $Aut_{\omega}(\cn)$ locally uniformly on $\Om$.
 \end{itemize}
  \end{theorem}   
\medskip
	
	Our next result is an application of Theorem~\ref{thm-apprxges} to the theory of Loewner PDE. The study was motivated by Arosio-Bracci-Wold \cite{ABFW13} and 
	Hamada \cite{Hamada15}. We present a brief introduction to Loewner PDE before presenting our result. The Loewner ordinary differential equation in the unit disc was introduced by Loewner\cite{Loew1923} in 1923 to study Bieberbach's conjecture about the coefficients of a univalent map on the unit disc. The theory in one variable was developed later by Kufarev \cite{Kuf} and Pommerenke \cite{Pomm1965} (see \cite{MBCD2010} for a brief survey of Loewner differential equations). 
	The Loewner theory has been used to prove many deep results in geometric function theory \cite{GKbook}.
	It was one of the main ingredients in de Brange's proof of Bieberbach conjecture \cite{dB1985}.
	Then there were several generalizations;  Some of them are:  chordal Loewner theory \cite{KSS1968} and Loewner theory in several complex variables \cites{Pfal1974, Pfal1975}. 
	The theory of the Loewner equation in several complex variables was extended by 
 Pfaltzgraff \cites{Pfal1974, Pfal1975}, and later studied by many others \cites{Poreda1991,GHK2002, GHKK2008, HK2003, GKP2005}. 
	A map $H:\Om\times[0,\infty)\to\cplx^n$ is called a Herglotz vector field of order $d$ on $\Om$ if 
	\begin{itemize}
		\item [(i)]For each $z\in\Om$, $H(z,\cdot):[0,\infty)\to\Om$ is measurable.
		\item [(ii)]  For each $s\in[0,\infty)$, $H(.,s)$ is holomorphic vector field and $H(\cdot , s)$ is a semicomplete holomorphic vector field for almost all $s \in [0, \infty)$, i.e. for each $z_0\in\Om$, the ODE defined by
		$$
		\dfrac{dw}{dt}=H(w(t),s),\;\;\; w(0)=z_0
		$$
		has a solution for all $t\in[0,\infty)$ for almost all $s \in [0, \infty)     $.
		\item[(iii)] 
  For each $t_0\in[0,\infty)$, and for each compact $K\subset\Om$, there exists a function $c_{t_{0}}^K\in L^d([0,t_0],[0,\infty))$ such that $||H(z,t)||\leq c_{t_{0}}^K$ for all $z\in K$ almost all $t\in[0, t_{0}]$. 
	\end{itemize}
	For a Herglotz vector field $H$ of order $d\in[1,\infty]$ on a complete hyperbolic domain $\Om$ the Loewner PDE is the following equation: 
	\begin{align}
		\dfrac{\partial f_t(z)}{\partial t} &=-df_t(z)H(z,t).\label{loewnerpde}
	\end{align} 
	A solution of the Loewner PDE is a family of holomorphic maps $\{f_t\}_{t\in[0,\infty)}$ from $\Om$ to a complex manifold $M$ which is locally absolutely continuous on $[0,\infty)$ satisfies \eqref{loewnerpde} and $t\mapsto f_t$ is continuous in the topology of holomorphic maps from $\Om$ to $M$. Such a family of maps is closely related to evolution family $\{\varphi_{s,t}\}$ that satisfies Loewner ODE \eqref{loewnerode}
	
	\begin{align}\label{loewnerode}
		\dfrac{\partial \varphi_{s,t}(z)}{\partial t} &=H(\varphi_{s,t}(z),t),~~a.e ~~t \in [s,\infty)
	\end{align} 
	in the following sense: 
	\begin{align}
		f_s &=f_t\circ\varphi_{s,t}, \;\; 0\leq s\leq t. \label{funleqn}
	\end{align}
	The main open problem in the field is; {\em To solve whether, on a complex complete hyperbolic domain, there is a solution to Loewner PDE with values in $\cplx^n$}. Solution of such Loewner PDE with values in certain abstract complex manifold has already been found in \cite{BCD2009}.
	Recently the connection between the approximation of biholomorphic maps and the solution of Loewner PDE  has been found \cite{ABFW13}. As an application of Theorem~\ref{thm-apprxges} we state the following.

	\begin{theorem}\label{T:Loewner Pde}
		Let $n \geq 2$. Let $\Omega \subset \mathbb{C}^{n}$ be a complete hyperbolic domain containing the origin and invariant under positive time flows of the vector field $V \in\mathfrak{X}_{\mathcal{O}}{(\mathbb{C}^{n})}$ with a globally attracting fixed point at the origin and $DV(0)=A$. Suppose that  there exist $\gamma, \alpha>0$ such that $\|X(t,z)\|\leq \gamma e^{-\alpha t}\|z\|$, for all $t \geq 0$ and for all $z \in \Om$, where $X(t,z)$ is the flow of  the vector field $V$. Assume that 
		\begin{align}\label{E:conditi2}
		2\alpha+k_{-}(A)&>0.	
		\end{align}
		\noindent
		 Let $G: \Omega \times \mathbb{R}^{+} \to \mathbb{C}^{n}$ be a Harglotz vector field of order $d \in [1, \infty]$.
		Then there exist a Loewner chain $f_{t}: \Omega \to \mathbb{C}^{n}$ of order $d$
		which solves the Loewner PDE 
		\begin{align}\label{E:llpd}
			\frac{\partial f_{t}}{\partial t}(z)&=-df_{t}(z)(G(z,t)) ~~a.e.~~ t \geq 0~~\forall z \in \Omega. 
		\end{align}	
		Moreover, $R=\cup_{t \geq 0}f_{t}(\Omega)$ is a Runge and Stein domain in $\mathbb{C}^{n}$ and any other solution to \eqref{E:llpd} with values in $\cn$ is of the form $(\vphi \circ f_{t})$ for a suitable map $\vphi: R \to \mathbb{C}^{n}$.
	\end{theorem}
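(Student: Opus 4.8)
The plan is to follow the scheme of Arosio--Bracci--Forn\ae{}ss--Wold \cite{ABFW13} and of Hamada \cite{Hamada15}, with \Cref{thm-apprxges} playing the role that the starlike (respectively, linear spirallike) approximation theorem plays there. First I would attach to the Herglotz vector field $G$ its evolution family $(\varphi_{s,t})_{0\le s\le t<\infty}$ on $\Om$: univalent self-maps with $\varphi_{s,s}=\mathrm{id}$, $\varphi_{s,t}\circ\varphi_{r,s}=\varphi_{r,t}$, $t\mapsto\varphi_{s,t}$ locally absolutely continuous, and $\partial_t\varphi_{s,t}(z)=G(\varphi_{s,t}(z),t)$ for a.e.\ $t$; such a family, of order $d$, exists because $\Om$ is complete hyperbolic. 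The proof then splits into: (A) constructing a Loewner chain of order $d$ with values in $\cn$, that is, univalent holomorphic maps $f_t\colon\Om\to\cn$ with $f_s=f_t\circ\varphi_{s,t}$ for $s\le t$ and $t\mapsto f_t$ locally absolutely continuous, such that $R:=\bigcup_{t\ge0}f_t(\Om)$ is Runge and Stein; and (B) the uniqueness statement. Once (A) is in place, differentiating the relation $f_s=f_t\circ\varphi_{s,t}$ in $t$ at $t=s$ and letting $s$ range over $[0,\infty)$ shows that $(f_t)$ automatically satisfies \eqref{E:llpd} for a.e.\ $t$, so the PDE itself needs no further argument.

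For part (A): by \Cref{thm-spirallikeRunge1} the spirallike domain $\Om$ is Runge in $\cn$, and since $\Om$ is complete hyperbolic it is taut, hence pseudoconvex, hence Stein. The crucial point --- the one I expect to be the main obstacle --- is that the subdomains occurring in the construction are Runge; concretely, that $\varphi_{s,t}(\Om)$ is Runge in $\cn$ for all $0\le s\le t$. I would prove this along the lines of the starlike case of \cite{ABFW13}, using the Runge-ness of $\Om$ together with the group law of the evolution family (for instance by writing $\varphi_{s,t}(\Om)=g_t^{-1}(g_s(\Om))$ in the abstract Loewner range discussed in (B) and transporting Runge-ness back through $\Om$). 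Granting this, $\varphi_{s,t}\colon\Om\to\varphi_{s,t}(\Om)$ is a biholomorphism from a spirallike domain satisfying the hypotheses of \Cref{thm-apprxges} (the flow estimate $\|X(t,z)\|\le\gamma e^{-\alpha t}\|z\|$ and the spectral condition \eqref{E:conditi2}) onto a Runge domain, so \Cref{thm-apprxges} yields that each $\varphi_{s,t}$ is a limit, uniformly on compacts of $\Om$, of automorphisms of $\cn$. With this, the usual Anders\'{e}n--Lempert limiting procedure (as in \cite{ABFW13}, in the spirit of Rosay--Rudin's Fatou--Bieberbach constructions) produces a sequence $t_k\nearrow\infty$, automorphisms $\Psi_k\in\mathrm{Aut}(\cn)$ approximating $\varphi_{t_k,t_{k+1}}$ ever more closely on an exhaustion of $\Om$, and a Loewner chain $(f_t)$ with values in $\cn$ arranged so that each $f_{t_k}(\Om)$ is an automorphic copy (up to the limit) of the Runge domain $\Om$, hence Runge in $\cn$. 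Since $R=\bigcup_k f_{t_k}(\Om)$ and the $f_{t_k}(\Om)$ increase, $R$ is Runge (an increasing union of Runge subdomains of $\cn$ is Runge); and $R$ is an increasing union of the Stein domains $f_{t_k}(\Om)$, each of which is Runge in the next (both being Runge in $\cn$), so $R$ is Stein by the Docquier--Grauert theorem.

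For part (B): by the completeness of $\Om$ the evolution family $(\varphi_{s,t})$ admits an abstract Loewner range $(\mathfrak R,(g_t))$ \cite{BCD2009}, a complex manifold $\mathfrak R$ with univalent maps $g_t\colon\Om\to\mathfrak R$, $g_s=g_t\circ\varphi_{s,t}$, $\mathfrak R=\bigcup_t g_t(\Om)$, satisfying the universal property that every Loewner chain $(h_t\colon\Om\to N)$ subordinate to $(\varphi_{s,t})$ factors uniquely as $h_t=\Lambda\circ g_t$ with $\Lambda\colon\mathfrak R\to N$ holomorphic. Applied to the chain $(f_t)$ from (A) this gives $\Lambda_0\colon\mathfrak R\to\cn$ with $f_t=\Lambda_0\circ g_t$; since the $f_t$ are univalent and $\bigcup_t f_t(\Om)=R$, the map $\Lambda_0$ is a biholomorphism of $\mathfrak R$ onto $R$. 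If $(h_t)$ is any other solution of \eqref{E:llpd} with values in $\cn$, then $(h_t)$ is a Loewner chain subordinate to $(\varphi_{s,t})$, hence $h_t=\Lambda\circ g_t=(\Lambda\circ\Lambda_0^{-1})\circ f_t$, and $\Phi:=\Lambda\circ\Lambda_0^{-1}\colon R\to\cn$ is the desired map. To summarize, the genuinely hard part is the Runge-ness of the domains $\varphi_{s,t}(\Om)$ --- equivalently, keeping the Runge hypothesis alive at every stage of the Anders\'{e}n--Lempert scheme so that \Cref{thm-apprxges} stays applicable --- which is precisely where completeness, global asymptotic stability, the rate $\|X(t,z)\|\le\gamma e^{-\alpha t}\|z\|$ and condition \eqref{E:conditi2} are used; tautness $\Rightarrow$ Steinness, the PDE from the chain relation, and uniqueness from the universal property are then routine.
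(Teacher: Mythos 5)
Your proposal is correct and follows essentially the same route as the paper: invoke the abstract Loewner chain of \Cref{T: Abstruct approach to Loe.}, use \Cref{T:rungepair} for the Runge-pair property of the stages, realize the abstract Loewner range as a Runge Stein domain in $\cn$ via an Anders\'{e}n--Lempert limiting construction powered by \Cref{thm-apprxges}, and deduce uniqueness from the universal property. The only difference is one of packaging: the paper encapsulates the limiting construction in \Cref{T: union of Runge stein domain} (proved exactly as in \cite{ABFW13}) and then applies it, whereas you sketch that inductive scheme directly.
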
 
 
\noindent Theorem~\ref{T:Loewner Pde} 
 gives a partial answer to the open question  whether any Loewner PDE in a complete hyperbolic domain in $\cn$ admits a univalent solution in $\cn$. This generalizes results of \cite{ABFW13, Hamada15} Other partial positive answers are \cite{ABFW13,Hamada15, Duren}. 

 In the final section of the paper, we provide several examples of domains that satisfy the conditions of our theorems. Thus, those domains allow to approximate any biholomorphic maps onto a Runge domain by automorphisms of $\cn$. 
	we also provide an example of a complete hyperbolic Hartogs domain which is invariant with respect to  positive time flow of a nonlinear holomorphic  vector field  but not with respect to  any linear vector field. 
 
	\begin{comment}
	About the layout of the paper: In \Cref{sec-prelims}, we provide basic definitions and collect some results from the literature that we will use in our proofs. In \Cref{sec-Runge}, a proof of \Cref{thm-spirallikeRunge1} is provided.
  \Cref{sec-approx}  is the heart of this paper.  The proof of \Cref{thm-apprxges}, \Cref{thm-Volsympapprox} and \Cref{thm-pro} are presented there. The proof of \Cref{T:Loewner Pde} is presented in \Cref{sec-Application}. \Cref{sec-example} we provide some examples of a spirallike domain that satisfies the conditions of our theorems.
	\end{comment}
	
	% SECTION PRELIMINARIES

	\section{Technical Preliminaries}\label{sec-prelims}
	
	In this section, we first discuss about the holomorphic vector fields and their flows. Then we make a comparison of domains that were considered by Hamada in \cite{Hamada15} with ours when the vector field is linear. 
Let $\Om \subset \cn$ and $V$ be a holomorphic vector field on $\Om$. 
 To find the real flow of $V$ we consider the following equation 
 \begin{align}\label{E:diffeq}
		\dot{x}(t)&=V(x(t)),\;\;\;
		x(0)=x_{0},
	\end{align}
 We  consider the holomorphic differential
	equation associated to \eqref{E:diffeq} defined by  
	\begin{align}\label{E:diffeq1}
		\dot{x}(z)&=\widetilde{V}(x(z))\notag\\
		x(0)&=z_{0},
	\end{align}
	where $\widetilde{V}(z):=\frac{1}{2}(V(z)-iJV(z))$ , $J$ is the complex structure given by
 $J(\frac{\partial}{\partial x_{j}})=\frac{\partial}{\partial y_{j}}$, $J(\frac{\partial}{\partial y_{j}})=-\frac{\partial}{\partial x_{j}}$ for holomorphic coordinates $z_j=x_j+iy_j, j=1,\dots, n$,  
 and  $\dot{x}(z)$  denotes the complex derivative of $x$ with respect to the complex variable $z$. Again, the $(\ref{E:diffeq1})$ has a unique local holomorphic solution $z \to X(z, z_{0})$, which depends  holomorphically on  both $z$ and $z_{0}$. Using Cauchy-Riemann equation we conclude that (\ref{E:diffeq1}) is equivalent to  the system of equations 
	
	$$\frac{d\phi}{dt}=V(\phi(t)); ~~	\frac{d\psi}{ds}=i\cdot V(\psi(s)),$$ 
	where $z=t+is$,    $X(0)=z_{0}$. Thus, we have  $X(t+is, z)=\phi_{t}(\psi_{s}(z_{0}))$.
	
		A vector field $V$ on $\Om$ is said to be $\mathbb{R}$-complete if solution of (\ref{E:diffeq}) exists for $t \in \mathbb{R}$ for every $x_{0} \in \Om$. If $V$ is holomorphic vector field and solution of  $(\ref{E:diffeq1})$ exists for all $z \in \mathbb{C}$ for all $z_{0} \in \Om$ then $V$ is said to be $\mathbb{C}$-complete.
From \cite[Corollary 2.2]{Forst1996} we get that every $\mathbb{R}$-complete  holomorphic vector field is $\mathbb{C}$-complete. We will use this crucially in our proofs.
We always mean $\cplx$-complete vector field whenever we say a vector field is complete.	
If a holomorphic vector field $V \in \mathfrak{X}_{\mathcal{O}}(\cn)$ is  $\mathbb{C}$-complete) then its associated flow defines a smooth action of $(\mathbb{C},+)$ on $\cn$ through holomorphic automorphisms of $\cn$.

	In this paper we will consider the holomorphic vector fields with the following properties: 
	\begin{itemize}
		\item[i)] The vector field is defined on the whole of $\cn$.
		\item[ii)] The vector field is $\rea$-complete.
		\item[iv)] The vector field has a globally attracting fixed point.
	\end{itemize}
	Since the above properties play very important roles in our results, we just mention those here briefly before going further in our exposition. 
	We consider the following system of differential equations: Let $ E \subset \mathbb{R}^{n}$ be an open set containing the origin  and  $f:E \to \mathbb{R}^{n}$ is continuously differentiable mappings such that $f(0)=0$
	\begin{align}\label{eq-ddeq1}
		\frac{dX(t)}{dt}&  = f(X(t)) ,~~
		X(0)  =x_{0}.
	\end{align}
	Assume that  the solution of the system \eqref{eq-ddeq1}  
	exists for every $t \geq 0$,  $\forall x_{0} \in E$. 
	\begin{itemize}
		\item Then origin is said to be  a {\em stable equilibrium point} of the system \eqref{eq-ddeq1} if  for every $\epsilon>0$ there exist $\delta >0$ such that $X(t,x_{0}) \in B(0, \epsilon)$ for every $x_{0} \in B(0, \delta)$ for every $t \geq 0$. 
		\smallskip
		
		\item The origin is said to be {\em globally attracting equilibrilum point or globally asymptotically stable equilibrium point} if $E=\rea^n$ , the origin is stable equilibrium point such that $\lim_{t\to \infty} X(t,x_{0}) =0$ for all $x_{0} \in \rea^n$. A vector field is said to be {\em globally asymptotically stable} if the origin is a globally asymptotically stable equilibrium point.
	\end{itemize}
	
	\noindent
	Any holomorphic vector field $F$ on $\cn$ can also be seen as a holomorphic map from $F:\cn \to \cn$. For such a vector field $F$, we denote $DF(a)$ by the derivative matrix viewing it as a map. 
	\begin{remark}
Let for $A \in M_{n}(\cplx)$,
 $m(A):=\inf_{\|z\|=1}\rl \langle Az, z\rangle$.	The condition in \Cref{thm-hamada2}, becomes $k_+(A)<2m(A)$.
 As we had discussed earlier the differential equation that we consider in this case is: 
	\begin{align}
		\frac{dX}{dt}&= AX(t)\label{eq-lin}\\
		X(0)&=z. \notag
	\end{align}	
	The differential equation that Hamada \cite{Hamada15} considered is the equation introduced by Gurganus \cite{Gur}	: 
	\begin{align}
		\frac{dX}{dt}&= - AX(t) \label{eq-linneg}\\
		X(0)&=z.\notag
	\end{align}	
%	If a domain $\Om\subset\cn$ is spirallike with respect to $A$ if and only if $\Om$ is spirallike with respect to $-A$ in the sense of Hamada (considering the differential equation as \eqref{eq-linneg}). 
The Runge property does not get affected by this change of definition. But, the approximation of the biholomorphic map of $\Om$ by automorphisms of $\cn$ get affected very much by this change. Hamada's condition $k_+(A)<2m(A)$ (the condition in Result~\ref{thm-hamada2}) does not remain invariant under the above change of definition. In general $m(A)$ is not a similarity invariant. Hence, the \Cref{thm-linear case} really enlarges the class of domains for which every biholomorphism with Runge image can be approximated by elements of $\text{Aut}(\cn)$ locally uniformly on $\Om$, even if the vector field is linear (see \Cref{ex-outside} for an explicit example).
\end{remark}
	
	Next, we state a result due to Arosio-Bracci-Hamada-Kohr \cite{ABHK2013} which gives the existence of a solution of Loewner PDE with values in a complex manifold on a complete hyperbolic domain. A manifold is said to be completely hyperbolic if it forms a complete metric space with respect to the Kobayashi distance (see \cite{Kobaya} for details.)
	
	\begin{result}\cite[Theorem 5.2]{ABHK2013}\label{T: Abstruct approach to Loe.}  Let $M$ be a complete hyperbolic complex manifold of dimension $n$. Let $G :M \times \mathbb{R}^{+} \to TM$ be a Herglotz vector field of order $d$. Then there exists a  Loewner chain $f_{t}:M \to N$ of order $d$ which solves the Loewner PDE 
		\begin{align} \label{E:LPDE}
			\frac{\partial f_{t}}{\partial t}(z)&=-df_{t}(z)(G(z,t)) ~~a.e.~ t \geq 0~~\forall z \in M ,
		\end{align}  
		where $N =\cup_{t \geq 0}f_{t}(M)$ is a complex manifold of dimension $n$ and any other solution to \eqref{E:LPDE} with values in a complex manifold $Q$ is of the form $(\Lambda \circ f_{t})$ where $\Lambda : N \to Q$ is holomorphic.
	\end{result}
	We now state a result by Thai-Duc \cite{Dudoc} which gives a characterization of the complete hyperbolic Hartogs domain. This plays a vital role in \Cref{T:Example of Domain}.
	\begin{result}\label{T:Do duc thai} \cite{Dudoc}
		Let $ X$ be a complex space and  $\phi$ a plurisubharmonic function on
		$X$. Assume that for every $x \in X$, there exists an open neighbourhood $U$ of $x$ in
		$X$ and a sequence $\{h_j\}$ of holomorphic functions on $ U$ and a sequence $ \{c_{j}\}$ of real
		numbers in the interval $(0, 1)$ such that the sequence $\{c_{j} \log |h_{j}|\}$ converges uniformly
		on compact subsets of $U$ to the function $\phi$. Then the Hartogs domain $\Omega_{\phi}(X)=\{(x,z) \in X\times \mathbb{C}: | z|<e^{-\phi(x)}\}$ is
		complete hyperbolic if and only if $X$ is complete hyperbolic and $\phi$ is continuous
		on $ X$.
	\end{result}

	Next, we state a result due to Hamada \cite[Theorem 5.2]{Hamada15} which describes a condition under which a pair of complex manifolds is a Runge pair.
	\begin{result}\label{T:rungepair}\cite{Hamada15}
		Let $M$ be  a complete hyperbolic Stein Manifold and $Q$ be a complex manifold of the same dimension. Let $(f_{t}:M \to Q)$ be a Loewner chain of order $d \in [1 ,\infty]$. Then $(f_{s_{1}}(M),f_{s_{2}}(M))$ is a Runge pair for $0 \leq s_{1}<s_{2}$.	
	\end{result}

 The next result due to Lloyd \cite[Theorem 3]{Lloyd} will be used to prove \Cref{L:composconver}.
 \begin{result}\cite[Theorem 3]{Lloyd}\label{R:Lloyd}
     Let $D$ be a bounded domain in $\cn$. Suppose
$f$ and $g$ are two continuous maps from $\overline{D}$ to $\cn$ which are holomorphic on $D$  such that
$$\|g(z)\| < \| f (z)\|~ \forall
z \in \partial D.$$
Then $f$ and $f + g$ have the same number of zeros in $D$ counting multiplicities,
where $\|\cdot\|$ is the standard norm in $\cn$.
 \end{result}
	
\noindent
	We need the following lemma  for the approximation result for the linear case.
	\begin{result} \cite[Page-311]{SimDav}\label{L:exp}
		 For every $\epsilon>0$ there exist $\rho(\epsilon), m(\epsilon)>0$, such that 
		 \begin{align*}
		 	\|e^{-tA}z\| &\leq \rho(\epsilon)e^{-(k_{-}(A)-\epsilon)t}\|z\|\\
		 	\|e^{tA}z\| &\leq m(\epsilon)e^{(k_{+}(A)+\epsilon)t}\|z\|
		 \end{align*}
	for all $t \geq 0$, and for all $z \in \cn$. 
\end{result}

 %Lemma about Runge domains
 We now state and prove a lemma that will be useful for our proofs.
 \begin{lemma}\label{lem-Runge-composition}
  Let $\Om$ be a domain in $\cn$ and $\vphi:\Om\to\cn$ be a biholomorphic map with $\vphi(\Om)$ Runge. Assume that $\mu:\Om\to\cn$ is a map such that $\mu(\Om)$ is Runge and $\mu(\Om)\subset\Om$. Then $\lambda\circ\vphi\circ\mu(\Om)$ is a Runge domain for every $\lambda\in Aut(\cn)$.
 \end{lemma}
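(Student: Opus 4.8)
The plan is to obtain \Cref{lem-Runge-composition} purely from the stability of the Runge property under two operations: biholomorphic change of coordinates, and transitivity of relative Runge-ness. It is convenient to work with the relative notion: for a domain $U\subseteq\cn$ and a subdomain $W\subseteq U$, say that $W$ is \emph{Runge in $U$} when $\hol(U)$ is dense in $\hol(W)$ in the topology of uniform convergence on compact subsets of $W$. Since polynomials are dense in $\hol(\cn)$, the statement ``$W$ is Runge in $\cn$'' is the same as ``$W$ is a Runge domain'' in the sense of \Cref{Intro}.

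First I would establish three elementary facts. \textbf{(a)} If $D\subseteq\cn$ is a Runge domain and $D\subseteq\Om$, then $D$ is Runge in $\Om$; indeed the polynomials lie in $\hol(\Om)$ and, by hypothesis, are already dense in $\hol(D)$. \textbf{(b)} If $\psi\colon U_1\to U_2$ is a biholomorphism and $W$ is Runge in $U_1$, then $\psi(W)$ is Runge in $U_2$: given $f\in\hol(\psi(W))$ and a compact $K\subseteq\psi(W)$, pick $h\in\hol(U_1)$ approximating $f\circ\psi$ on the compact set $\psi^{-1}(K)\subseteq W$; then $h\circ\psi^{-1}\in\hol(U_2)$ approximates $f$ on $K$ to the same accuracy. \textbf{(c)} (Transitivity.) If $W$ is Runge in $U$ and $U$ is Runge in $\cn$, then $W$ is Runge in $\cn$: approximate $f\in\hol(W)$ on a given compact $K\subseteq W$ first by an element of $\hol(U)$, and then, on the same $K$, by a polynomial.

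The proof then proceeds in four steps. Write $D:=\mu(\Om)$, which by hypothesis is a Runge domain with $D\subseteq\Om$; by \textbf{(a)}, $D$ is Runge in $\Om$. Since $\vphi\colon\Om\to\vphi(\Om)$ is a biholomorphism, \textbf{(b)} shows $\vphi(D)$ is Runge in $\vphi(\Om)$. Since $\vphi(\Om)$ is Runge in $\cn$ by hypothesis, \textbf{(c)} gives that $\vphi(D)=(\vphi\circ\mu)(\Om)$ is a Runge domain of $\cn$. Finally, applying \textbf{(b)} once more with $\psi=\lambda$ and $U_1=U_2=\cn$ — both $\lambda$ and $\lambda^{-1}$ are automorphisms of $\cn$, so each sends $\hol(\cn)$ onto $\hol(\cn)$ — we conclude that $(\lambda\circ\vphi\circ\mu)(\Om)=\lambda(\vphi(D))$ is Runge in $\cn$, which is the assertion. (It is a domain in the first place because $\vphi$ is biholomorphic on $\Om\supseteq D$ and $\lambda$ is biholomorphic on $\cn$, so this image is open and connected.)

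I do not see any genuine obstacle here: the argument involves no computation. The only points to keep in view are the bookkeeping between the relative notion ``Runge in $U$'' and the absolute one, and the observation that step \textbf{(b)} does apply to $\lambda\in Aut(\cn)$ even though automorphisms of $\cn$ need not be polynomial — what matters is only that $\lambda$ and $\lambda^{-1}$ preserve $\hol(\cn)$.
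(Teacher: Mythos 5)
Your proof is correct, and it takes a cleaner, more structural route than the paper's. The paper fixes $f\in\hol(\vphi\circ\mu(\Om))$ and a compact $K$, approximates $f\circ\vphi$ by polynomials $p_n$ on $\vphi^{-1}(K)$ (using Rungeness of $\mu(\Om)$), then separately approximates the vector-valued map $\vphi^{-1}$ by polynomial maps $q_j$ on $K$ (using Rungeness of $\vphi(\Om)$), and finally closes with a diagonal argument on the double sequence $p_n\circ q_j$. You instead package everything through the relative notion of Runge pairs: $\mu(\Om)$ Runge in $\cn$ implies $\mu(\Om)$ Runge in $\Om$; biholomorphic invariance transfers this to ``$\vphi(\mu(\Om))$ Runge in $\vphi(\Om)$''; transitivity together with ``$\vphi(\Om)$ Runge in $\cn$'' gives the absolute Runge property; and a final application of biholomorphic invariance under $\lambda\in\text{Aut}(\cn)$ finishes. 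Both proofs use the two hypotheses in the same order, but your decomposition sidesteps the diagonal argument and the need to approximate $\vphi^{-1}$ componentwise by polynomial maps; it is also slightly more transparent about which hypotheses feed which step. The only thing worth making explicit (as you did) is that $\lambda$ need not be polynomial — only the fact that $\lambda$ and $\lambda^{-1}$ preserve $\hol(\cn)$ is used — which is exactly what makes fact (b) applicable with $U_1=U_2=\cn$.
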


 \begin{proof}
  It is enough to prove that $\vphi\circ\mu(\Om)$ is Runge. Let $f\in\hol(\vphi\circ\mu(\Om))$ and $K$ be a compact subset of $\vphi\circ\mu(\Om)$. Hence, $\vphi^{-1}(K)$ is a compact subset of $\mu(\Om)$. Clearly, $f\circ\vphi\in\hol(\mu(\Om))$. Since $\mu(\Om)$ is Runge, there exists a sequence of polynomials $\{p_n\}$ such that $p_n\to f\circ\vphi$ uniformly on $\vphi^{-1}(K)$. Hence, $p_n\circ\vphi^{-1}\to f$ uniformly on K. Since $\vphi^{-1}:\vphi(\Om)\to\Om$ is a holomorphic and $\vphi(\Om)$ is Runge, there exists a sequence $\{q_j\}$ of polynomial maps from $\cplx^n\to\cn$ such that $q_j\to \vphi^{-1}$ uniformly on $K$. By a diagonal-type process, we see that $f$ can be approximated by polynomials uniformly on $K$. Hence, $\vphi\circ\mu(\Om)$ is Runge.
 \end{proof}
 
We now state a result from \cite[Page-110]{Forstbook} that will be useful in the context of approximation by volume preserving automorphisms.
 Recall that, for holomorphic vector field $F =\sum_{j=1}^{n}F_{j}\frac{\partial}{\partial z_{j}}\in \mathfrak{X}_{\mathcal{O}}(\cn)$  and standard holomorphic $(n,0)$
form $\omega=\,dz_{1} \wedge \,dz_{2} \wedge \cdots \wedge \,dz_{n}$, $$\text{div}_{\om}F=\frac{\partial F_{1}}{\partial z_{1}}+\frac{\partial F_{2}}{\partial z_{2}}+\cdots +\frac{\partial F_{n}}{\partial z_{n}}.$$ 
\begin{lemma}\label{vloume preserve flow}
 	Let $\Om \subset \cn$ be an open set and $F: \Om \to \mathbb{C}^{n}$ be a complete $\smoo^{1}$- vector field with flow $X(t,z)$. Then $$\frac{d}{dt}\text{det} D_{z}(X(t,z))=\text{div}(F(X(t,z)))\cdot \text{det}~ D_{z}(X(t,z)).$$
 \end{lemma}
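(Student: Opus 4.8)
The plan is to compute the derivative $\frac{d}{dt}\det D_z X(t,z)$ directly from the flow equation, using the standard formula for differentiating a determinant. First I would fix $z$ and write $M(t) := D_z X(t,z)$, the Jacobian matrix (in $z$) of the time-$t$ flow map. Differentiating the defining ODE $\frac{\partial}{\partial t} X(t,z) = F(X(t,z))$ with respect to the spatial variable $z$ and interchanging the order of the (smooth) partial derivatives — justified by $F \in \smoo^1$ and smooth dependence of the flow on initial conditions — yields the variational equation $\frac{d}{dt} M(t) = DF(X(t,z)) \, M(t)$, where $DF$ is the Jacobian of $F$ evaluated along the trajectory.

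Next I would invoke Jacobi's formula (Liouville's formula): for a differentiable matrix-valued function $M(t)$ with $M(t)$ invertible (here $M(0) = I$ and $M(t)$ stays invertible since $X(t,\cdot)$ is a diffeomorphism, being the flow of a complete vector field), one has
\[
\frac{d}{dt}\det M(t) = \det M(t)\cdot \operatorname{tr}\!\left(M(t)^{-1}\frac{d}{dt}M(t)\right).
\]
Substituting $\frac{d}{dt}M(t) = DF(X(t,z))\,M(t)$ gives $M(t)^{-1}\frac{d}{dt}M(t) = M(t)^{-1}DF(X(t,z))M(t)$, whose trace equals $\operatorname{tr}\big(DF(X(t,z))\big)$ by the cyclic invariance of the trace. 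Finally, observe that $\operatorname{tr}\big(DF(w)\big) = \sum_{j=1}^n \frac{\partial F_j}{\partial z_j}(w) = \operatorname{div}_\omega F(w)$ for the standard holomorphic volume form (this is exactly the definition of $\operatorname{div}_\omega$ recalled just before the lemma). Combining these identities gives
\[
\frac{d}{dt}\det D_z X(t,z) = \operatorname{div}\big(F(X(t,z))\big)\cdot \det D_z X(t,z),
\]
which is the claim.

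The only genuinely delicate point — and the step I would be most careful with — is the justification of differentiating under the flow: namely that $X(t,z)$ is jointly $\smoo^1$ in $(t,z)$ (so that $D_z X(t,z)$ exists and the mixed partials commute) and that $M(t) := D_z X(t,z)$ solves the variational equation. For a $\smoo^1$ vector field this is the classical smooth-dependence-on-initial-conditions theorem from ODE theory (see \cite{perko}), and completeness of $F$ guarantees $X(t,z)$ is defined for all $t$, so no issue of maximal existence time arises. Everything else is the routine linear algebra of Jacobi's formula, so I would not belabor it.
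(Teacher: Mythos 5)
Your proof is correct, and it is the standard argument: differentiate the flow equation in $z$ to obtain the variational equation $\frac{d}{dt}D_zX(t,z)=DF(X(t,z))\,D_zX(t,z)$, apply Jacobi's formula for the derivative of a determinant, and use cyclicity of the trace to reduce to $\operatorname{tr}DF=\operatorname{div}F$. The paper itself gives no proof here --- it simply cites the result from Forstneri\v{c}'s book (page 110), where essentially this same argument appears --- so there is nothing to compare beyond noting that your route is the canonical one. One small remark: you do not actually need to invoke invertibility of $M(t)$ to apply Jacobi's formula; the cofactor form $\frac{d}{dt}\det M=\operatorname{tr}(\operatorname{adj}(M)\,\dot M)$ gives the same conclusion without that hypothesis, though of course invertibility does hold here since each $X_t$ is a diffeomorphism.
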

% SPIRALLIKE DOMAIN WRT ASYMTOTIC STABLE VF IS RUNGE
\section{Runge property}\label{sec-Runge}
Before proving the main result of this section we state and prove a proposition that will be useful in our proof of \Cref{thm-spirallikeRunge1}.

\begin{proposition}\label{P: global asymptotic property}
		Let $F \in \mathfrak{X}_{\mathcal{O}}{(\mathbb{C}^{n})}$ be a complete vector field such that the origin is a globally attracting fixed point and $\Omega$ be a positive time flow invariant domain containing the origin. Then, for every  compact subset $K \subset \mathbb{C}^{n}$, there exists a real number $M_{K}>0$ such that $X(t,z) \in \Omega$ for all $t >M_{K}$ and for all $z \in K$.
	\end{proposition}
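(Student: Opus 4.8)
The plan is to combine three ingredients: that $\Omega$ contains a ball about the origin; that the origin, being a stable equilibrium, admits arbitrarily small neighbourhoods whose forward orbits stay in that ball; and that global asymptotic stability drives every orbit into such a neighbourhood, with continuous dependence on the initial point turning this into a \emph{locally uniform} statement that compactness upgrades to uniformity on $K$.

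First I would fix $r>0$ with $B(0,r)\subset\Omega$, possible since $0\in\Omega$ and $\Omega$ is open. Applying the definition of \emph{stable equilibrium point} to the real flow of $F$ with $\epsilon=r$, I obtain $\delta>0$ such that $X(t,w)\in B(0,r)$ for every $t\geq 0$ whenever $\|w\|<\delta$; here $X(t,\cdot)$ is defined for all $t\geq 0$ because $F$ is ($\mathbb{R}$-, hence $\mathbb{C}$-) complete, so the statement is meaningful to begin with.

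Next, for a fixed $z\in K$, global asymptotic stability gives $X(t,z)\to 0$ as $t\to\infty$, so there is $T_z>0$ with $\|X(T_z,z)\|<\delta/2$. Since the flow $X\colon\mathbb{R}\times\cn\to\cn$ of the complete holomorphic vector field $F$ is smooth (indeed $X(T_z,\cdot)\in\mathrm{Aut}(\cn)$), the map $w\mapsto X(T_z,w)$ is continuous, so there is an open neighbourhood $U_z\ni z$ with $\|X(T_z,w)\|<\delta$ for all $w\in U_z$. Using the flow identity $X(t,w)=X(t-T_z,X(T_z,w))$, valid for $t\geq T_z$, together with the choice of $\delta$, I conclude $X(t,w)\in B(0,r)\subset\Omega$ for all $t\geq T_z$ and all $w\in U_z$. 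Finally, $\{U_z\}_{z\in K}$ is an open cover of the compact set $K$; picking a finite subcover $U_{z_1},\dots,U_{z_m}$ and setting $M_K:=\max\{T_{z_1},\dots,T_{z_m}\}$, every $z\in K$ lies in some $U_{z_j}$, whence $X(t,z)\in\Omega$ for all $t\geq T_{z_j}$, and in particular for all $t>M_K$.

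There is no deep obstacle here; the entire content is in correctly marrying the $\epsilon$--$\delta$ stability of the origin (which supplies a \emph{forward-invariant} small neighbourhood, not merely a transiently small one) with the convergence $X(t,z)\to 0$ and the continuity of the time-$T_z$ flow map, so that the entry time into $\Omega$ is locally constant on $K$ and then, by compactness, globally bounded on $K$. The only point demanding a moment's care is the appeal to completeness of $F$ to ensure $X(t,z)$ exists for all $t\geq 0$, which is guaranteed by our standing hypotheses (and by \cite{Forst1996}, $\mathbb{R}$-completeness is equivalent to $\mathbb{C}$-completeness).
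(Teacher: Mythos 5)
Your proof is correct and follows essentially the same route as the paper: fix a ball $B(0,r)\subset\Omega$, use stability of the origin to get a forward-invariant $\delta$-neighbourhood, use global asymptotic stability plus continuity of the time-$T_z$ flow map to make the entry time locally uniform, and finish by compactness of $K$. If anything, your explicit appeal to the flow identity $X(t,w)=X(t-T_z,X(T_z,w))$ and to the forward invariance supplied by the stability $\delta$ makes one step slightly cleaner than the paper's phrasing, which nominally asserts the orbit stays in $B(0,\delta/3)$ rather than merely in $B(0,r)$ after the entry time.
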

	\begin{proof}
		Let $r>0$ such that $B(0,r) \subset \Om$.  Since the origin is the  globally attracting fixed point of  $F$, there exists $\delta>0$ such that $X(t,z) \in B(0,r)$ $\forall t \geq 0$ and  $\forall z \in B(0, \delta)$.
  We also have $\lim_{t \to \infty}X(t,z) \to 0$ for every  $z \in \cn$. Therefore, for every $z \in K$,  there exist $t_{z}>0$, such that $X(t_{z},z) \in B(0, \frac{\delta}{3})$. Hence, from choice of $\delta>0$, we have 
  $$
  X(t,z) \in B(0,r) \subseteq \Omega,\;\; \forall t \geq  t_{z}.
  $$
  Let $K \subset \cn$ be a compact subset. For every $z \in K$ there exist $r_{z}, t_{z}>0$ such that $X(t,w) \in B(0, \frac{\delta}{3})$, for all $w \in B(z,r_{z})$ and $t\geq t_{z}$. Since $K$ is compact, hence, there exists $m \in \mathbb{N}$ and $\{z_{1},z_{2}, \ldots ,z_{m}\} \subset K$ such that $K \subset \cup_{j=1}^{m}B(z_{j},r_{z_{j}})$. Let $M_{K}=\max \{t_{z_{1}},t_{z_{2}}, \ldots ,t_{z_{m}}\}$. Therefore, $X(t,z) \in B(0, r)$, for all $z \in K$ , for all $t>M_{K}$.

\end{proof}
	\smallskip
 
	We now present the proof of \Cref{thm-spirallikeRunge1}.
	
	\begin{proof}[Proof of Theorem~\ref{thm-spirallikeRunge1}]
		Let $f \in  \mathcal{O}(\Omega)$ and $K \subset \Omega$ be a given  compact subset and 
  $\mu$ is a  complex Borel measure on $K$.  It is enough to show that if, for every $p \in \mathbb{C}[z_1,z_2, \ldots, z_n]$,
  $$\int_{K}p(z_1,z_2, \ldots ,z_n)\;\mathrm{d}\mu(z) =0  $$ 
  then  $$ \int_{K}f(z)\;\mathrm{d}\mu(z)=0.$$

 Consider 
  $$
  V_{K}:=\{e^{-\tau}:\tau \in \mathbb{C}, X(\tau,z) \in \Omega, \forall z \in K\} \cup \{0\} \subset \mathbb{C}. 
  $$
  Since $F$ is a $\mathbb{R}$-complete vector field, hence, using \cite[Corollary 2.2]{Forst1996},  we get that    $F$ is also $\mathbb{C}$-complete. Therefore, $X(\tau,z)$ is well defined for every $\tau \in \mathbb{C}$. We now show that $V_{K}$ is open and starshaped with respect to the origin.
		If $w \in V_{K}$, then $w = e^{-\tau_{1}}$ for some $\tau_{1} \in \mathbb{C}$ with $X(\tau_{1}, z) \in \Omega$ for all $z \in K$. For any $t \in (0,1)$ we have 
  $$
			tw =e^{\ln{t}}\cdot e^{-\tau_{1}}
			=e^{-(-\ln{t}+\tau_{1})}.
$$
	Clearly, $-\ln{t} >0$. Since $\Omega $ is an invariant domain under the positive time flows of the vector field $F$, hence, for every $z \in K$, it follows that
  $$X(-\ln{t}+\tau_{1},z)=X_{-\ln{t}}((X(\tau_1,z))) \in \Omega.$$ 
  Consequently, $tw \in V_{K}$ for all $t \in (0,1)$.  Clearly, $tw \in \Omega$ for $t=0,1$. Therefore, $V_{K}$ is starshaped. Hence, it is simply connected. Now we show that $V_{K}$ is an open subset. For that, we first show that  
  $S=\{\tau \in \mathbb{C}|~~  \forall ~ z \in K,~X(\tau,z) \in \Omega\}$
  is an open subset of $\mathbb{C}$. Let $\tau_{1} \in S$. By  definition of $S$ we get that $X(\tau_{1},z) \in \Om$ for every  $z \in K$. Let $\epsilon_{z}>0$ such that $B(X(\tau_{1},z),\epsilon_{z}) \subset \Om$. Since $X$ is a continuous map, hence, there exists $\delta_{z,\tau_{1}}>0$  such that $X(\tau,w) \in B(X(\tau_{1},z),\epsilon_{z})$ whenever $|\tau-\tau_{1}|<\delta_{z,\tau_{1}}$ and $\|w-z\| <\delta_{z,\tau_{1}}$. Since $K$ is compact, hence, we get that $K \subset \cup_{i=1}^{m}B(z_{i},\delta_{z_{i},\tau_{1}})$ for some $z_{1},z_{2},\ldots ,z_{m}\in K$. Therefore, choosing $0<\delta_{1}<\min\{\delta_{z_{1},\tau_{1}},\delta_{z_{2},\tau_{1}},\ldots ,\delta_{z_{m},\tau_{1}}\}$ we obtain that $B(\tau_{1},\delta_{1}) \subset S$. By open mapping theorem, we get that all the points of the form $e^{-\tau}$ are interior points of $V_{K}$. 
  We now show that there exists $r>0$ such that $B(0,r) \subset V_{K}$.
		Now for every  non zero complex number $\tau \in \mathbb{C}$,  we have $\tau=e^{-(\ln{\frac{1}{|\tau|}}-i\arg{\tau})},$ where $-\pi <\arg{\tau} \leq \pi$. Hence, we get that  $X(\ln(|\frac{1}{\tau}|-i \arg{\tau})),z)=X_{\ln(|\frac{1}{\tau}|)}(X(-i \arg{\tau},z))$. Clearly, the set $K'=\{X(-it ,z): -\pi \leq t\leq \pi,~z\in K\}$ is compact.  Since $\ln{|\dfrac{1}{\tau}|} \to \infty$ as $\tau \to 0$, hence, we infer from \Cref{P: global asymptotic property}, that  there exists $r>0$ such that $X(\ln{\frac{1}{|\tau|}}, z') \in \Omega$, whenever $|\tau|<r$ and $z' \in K'$. We also obtain that  $X(-i\arg{\tau},z) \in K'$ for all $z \in K$. Hence, for any $\tau \in B(0,r)$  and $z \in K$, we have $X(\ln{\frac{1}{|\tau|}},X(-i\arg{\tau},z)) \in \Omega$. Therefore, $V_{K}$ is an open subset.  Let $\widetilde{V}_{K}=V_{K}\setminus (-\infty,0]$. For each $\tau \in \widetilde{V}_{K}$, define $f_{\tau}:K \to \mathbb{C} $  by $f_{\tau}(z)=f(X(-\ln{\tau},z))$. 
  %We first check the the map $f_{\tau}$ is  well defined. If $\tau \in V_{K}$ so $\tau=e^{-\tau_{1}}$ for some $\tau_{1}=a_1+ib_1 \in \mathbb{C}$ with for all $z \in K$ follows that $X(\tau_1,z) \in \Omega$. Now \begin{align*}
			%	\ln(\tau) &= \ln(|\tau|)+i \arg(\tau) \\
			%	& =\ln(|e^{-\tau_{1}}|)+i \arg(e^{-\tau_{1}})\\
			%	&=-a_1-i b_1\\
			%	&=-\tau_{1}
			%\end{align*} 
			%So $X(-\ln(\tau),z)=X(\tau_{1},z) \in \Omega$. 
			Here $f(X(-\ln{\tau},z))$ is continuously differentiable with respect to the variable  $\tau$ for each $z \in K$. Hence, the function $\phi: \widetilde{V}_{K} \to \mathbb{C}$ defined by 
   $$\phi(\tau)=\int_{K}f_{\tau}(z)\;\mathrm{d}\mu(z)
   $$ 
   is holomorphic on $\widetilde{{V}}_{K}$. Since $K$ is compact in $\cn$, there exist  $R>0$ such that $K \subset \overline{B(0,R)}$. In view of Proposition~\ref{P: global asymptotic property},  we get  that $\exists~ \delta>0 $ such that $X(-\ln{\tau},y) \in \Omega$ for all $\tau \in \{\tau \in \mathbb{C}:|\tau|<\delta\}\cap \widetilde{{V}}_{K}$ and for all $y \in \overline{B(0, R)}$. Therefore,  $f(X(-\ln{\tau},z)) \in \mathcal{O}(B(0,R))$ for every  $ \tau \in \{|t|<\delta\} \cap \widetilde{{V}}_{K} $. Hence, there exists a sequence of polynomial $p_{n}$ such that $p_{n}$ uniformly converges to $f_\tau$ on every compact subset of $B(0,R)$, particularly on $K$. Therefore, for every $\tau$ in $V_{K}$ with $|\tau|<\delta$,  it follows that 
   $$
   \lim_{n \to \infty}\int_{K}p_{n}(z)\;\mathrm{d}\mu(z)= \int_{K}f((X(-\ln{\tau},z)))\;\mathrm{d}\mu(z).
   $$
   By assumption $\int_{K}p_{n}(X(-\ln{\tau},z))\;\mathrm{d}\mu(z)=0$. Hence, for  all $\tau \in V_{K}$ with $|\tau| <\delta$,  we obtain that $$\int_{K}f((X(-\ln{\tau},z)))\;\mathrm{d}\mu(z)=0.$$  Hence, $\phi$ vanishes identically on an open ball in $\widetilde{V}_{K}$. Since $\phi$ is a holomorphic map hence, it vanishes identically on $\widetilde{V}_{K}$. Therefore, $\phi(1)\equiv0$ and this implies that   $\int_{K}f(z)\;\mathrm{d}\mu(z)=0$. Hence, $\Omega$ is Runge.
		\end{proof}

	%SECTION non LINEAR VECTOR FIELDS
	
	\section{Approximation of biholomorphisms}\label{sec-approx}
We begin this section with the following lemma which will be used in the proof of \Cref{thm-apprxges}. This lemma says under the conditions of \Cref{thm-apprxges} the asymptotic behavior of the flows of the vector field are like exponential up to a biholomorphism of the domain $\Om$.	

 \begin{lemma}\label{L:composconver}
Let $V \in \mathfrak{X}_{\mathcal{O}}{(\mathbb{C}^{n})}$, $n \geq 2$, be a complete holomorphic  vector field with $V(0)=0$ and $DV(0)=A$. Assume that $ \Om  \subseteq \cn$ contains the origin and is invariant under the positive time flows of $V$.  Suppose that  there exist real numbers $\gamma, \alpha>0$ such that 
   \begin{itemize}
  \item [(i)]  $\|X(t,z)\|\leq \gamma e^{-\alpha t}\|z\|$ for all $t \geq 0$ and for all $z \in \Om$, where $X(t,z)$ is the flow of  the vector field $V$.  
  
  \item [(ii)]
$2\alpha+k_{-}(A)>0.$
\end{itemize}
Then there exists a biholomorphic map $F$ on $\Om$ into  $\cn$ such that   $e^{-tA}\circ X_{t}(z)$ converges to $F(z)$ as $t \to \infty$ uniformly on compact subsets of $\Om$.
 \end{lemma}
 \begin{proof}
Fix a compact set $K \subset \Om$. We will show that for every $\eps>0$, there exists $N_{\eps,K} \in \mathbb{N}$ such that
$\|e^{-t_{1}A}X_{t_{1}}(z)-e^{-t_{2}A}X_{t_{2}}(z)\|<\eps$ for every $t_{1}, t_{2}>N_{\eps, K}$  and for all $z \in K$. This ensures the uniform convergence of $\{e^{-tA}\circ X_{t}\}$ on $K$ as $t \to \infty$.
Since  $V$ is a holomorphic vector field on $\cn$, we can write
$V(z)=Az+R(z)$, where $R(z)=O(\|z\|^2)$ as $z \to 0$ i.e., there exist $ M, \delta>0$ such that
$\|R(z)\|\leq M\|z\|^2$ for all $z \in B(0, \delta)$. Since $V$ is globally asymptotically stable vector field on $\cn$, for every $z\in\cn$
$\|X(t,z)\| \to 0$ as $t\to\infty$.  We note that the convergence is uniform on every compact subset of $\cn$. Hence, there exists $\wtil{N_{K}}>0$ such that 
\begin{align}\label{E:nconv2}
\|R(X(t,z))\|&<M\|X(t,z)\|^2, \forall t >\wtil{N_{K}} , \forall z \in K.
\end{align}
Define $f_t:\cn\to\cn$ by 
$f_{t}(z):=e^{-tA}X_{t}(z)$.  Clearly, $f_t\in Aut(\cn)$ for all $t>0$. The family is also smooth in $t$-variable.
Using the mean value theorem in the $t$ variable,
we obtain that
\begin{align}\label{E:nconv}
 \|f_{t}(z)-f_{s}(z)\|&\leq \left\|\frac{d f_{t}(z)}{\,dt}\bigg|_{t=s+a_{s}}\right\||t-s|,   
\end{align}
where $s\leq s+a_{s}\leq t $.
We now find an estimate for $\left\|\frac{d f_{t}(z)}{\,dt}\right\|$.
\begin{align}
\frac{d f_{t}(z)}{\,dt}
&=-Ae^{-tA}(X(t,z)+e^{-tA}(A(X(t,z)))+e^{-tA}(R(X(t,z))) \notag\\
&=e^{-tA}R(X(t,z)),\notag\\
\end{align}
Hence, 
\begin{align}\label{E:nconv1}
 \|\frac{d f_{t}(z)}{\,dt}\bigg|_{s+a_{s}}\|&=\|e^{-(s+a_{s})A}R(X((s+a_{s}),z))\| .
\end{align}
By assumption, choose $\sigma>0$ such that $2\alpha+k_{-}(A)-\sigma>0$. 
In view of \Cref{L:exp},  we deduce that
 \begin{align}\label{E:nconv3}
\|\frac{d f_{t}(z)}{\,dt}\bigg|_{s+a_{s}}\|& \leq 
\rho(\eps_{1})e^{-(s+a_{s})(k_{-}(A)-\sigma)}\|R(X((s+a_{s}),z))\|.
\end{align} 
\noindent
Using \eqref{E:nconv2} and \eqref{E:nconv3}, we get that 
  \begin{align}\label{E:nconv4}
\|\frac{d f_{t}(z)}{\,dt}\bigg|_{s+a_{s}}\|& \leq 
M\rho(\sigma)e^{-(s+a_{s})(k_{-}(A)-\sigma)}\|(X((s+a_{s}),z)\|^2\;\; \forall s>\wtil{N_{K}}\;\; \forall z \in K.
\end{align} 
Using Assumption~(i), equations \eqref{E:nconv}, \eqref{E:nconv2} and \eqref{E:nconv4} we obtain that
\begin{equation}\label{E:nconv5}
 \|f_{t}(z)-f_{s}(z)\|\leq M'e^{-Cs}|t-s|\;\; \forall t>s> \wtil{N_K}\;\; \forall z \in K,  
\end{equation}
where $C=2\alpha+k_{-}(A)-\sigma>0$ and $M':=\sup_{z \in K}M\gamma^2\rho(\eps_{1})\|z\|^2$.
%{\color{red} DONE TILL HERE}
Since there is an exponential decay on the right hand side of \eqref{E:nconv5}, it follows that 
for every $\eps>0$, there exists $N_{\eps,K} \in \mathbb{N}$ such that
$\|e^{-t_{1}A}X_{t_{1}}(z)-e^{-t_{2}A}X_{t_{2}}(z)\|<\eps$ for every $t_{1}, t_{2}>N_{\eps, K}$  and for all $z \in K$.
This proves that every subsequence of  $\{f_{t}\}$ forms a Cauchy sequence on every compact subset of $\Om$.
Hence, there exists $F\in\hol(\Om, \cn)$ such that $\lim_{t \to \infty}e^{-tA}\circ X_{t}(z)=F(z)$ and the convergence is uniform on every compact subsets of $\Om$.

Next, we show that $F$ is a biholomorphic map from $\Om$ onto its range.
We have $f_t=e^{-tA}\circ X_{t} \in \text{Aut}(\cn)$ and $D(e^{-tA}X_{t})(0)=e^{-tA}DX_{t}(0)=e^{-tA}e^{tA}=I_{n} $ for all $t\geq 0$. Since $ Df_{t}(0) \to DF(0)$ as $t \to \infty$, hence, we obtain that $DF(0)=I_{n}$. Since $\text{det}(D(e^{-tA}X_{t})(z)) \neq 0$ for all $z \in \cn$, therefore,  by Hurwitz theorem in several variables, we conclude that $\text{det}(DF(z))$ is either non-zero on $\Om$ or identically zero on $\Om$. But, $\text{det}(DF(0))=1$ implies that $\det(DF(z)) \neq 0$ for all $z \in \Om$. Therefore, by invoking inverse function theorem, we obtain that $F$ is a local biholomorphism.
We need to show that $F$ is injective. Suppose not. Then, there exist two distinct points $z_{1}, z_{2}$ in $\Om$ such that $F(z_{1})=F(z_{2})$. Since $F$ is locally injective, hence, there exists $r>0$ such that $F$ is injective on $B(z_1,r)$. Shrinking $r>0$, if needed, we assume that $z_{2} \notin \overline{B(z_1,r)}$. Define $\widetilde{F}_{t}:=f_{t}-f_{t}(z_{2})$
 and $\widetilde{F}:=F-F(z_{2})$. Clearly, $\widetilde{F}^{-1}(0) \cap \partial B(z_1,r) =\emptyset$. 
 Choose $\alpha>0$ such that $\inf_{\bdy B(z_1,r)}|\widetilde{F}|>\alpha$.
 
 Now consider $g_t:=\widetilde{F}_{t}-\widetilde{F}$. We have
 $$
     \sup_{\bdy B(z_1,r)}|g_t(z)|\leq \sup_{\bdy B(z_1,r)}\left ( |f_t(z)-F(z)|+|f_t(z_2)-F(z_2)|\right). 
 $$
 Hence, there exists $N_0\in\natu$ such that $\sup_{\bdy B(z_1,r)}|g_{t}(z)|\leq \al$ for all $t>N_0$. Hence, we obtain that 
 $$
 \sup_{\bdy B(z_1,r)}|g_t(z)|\leq \al < |\widetilde{F}(z)| \;\;\forall z\in\bdy B(z_1,r).
 $$
Therefore, we infer from \Cref{R:Lloyd}, that $\widetilde{F}$ and $\widetilde{F_t}$ has same number of zeros on $B(z_1,r)$ for all $t>N_0$. This contradicts the fact that $f_{t}$ are injective. Therefore, $F:\Om \to \cn$ is an injective holomorphic map.
\end{proof}
\smallskip

		We now present the proof of the main theorem of this paper Theorem~\ref{thm-apprxges}

		\begin{proof}[Proof of \Cref{thm-apprxges}]

 Here $V$ is a complete holomorphic vector field, $X\colon \mathbb{R} \times \cn \to \cn$ is the flow of the vector field $V$. Let $\Phi: \Om \to \Phi(\Om)$ be a biholomorphism such that $\Phi(\Om)$ is Runge.
 Define a map 
 $H\colon[0,1] \times \Omega \to \mathbb{C}^{n}$ 
	by \begin{align}\label{E:isononl}
	H(t,z):=
	\begin{cases}
		e^{c\ln{t}A}(\Phi(X(-c\ln{t},z))), ~\text{if}~ t \in (0,1]\\
		F(z), ~\text{if}~ t=0,\\
	\end{cases}
	\end{align} 
where $F : \Om \to \cn$ is a biholomorphism onto its image is chosen in such way  $e^{c\ln{t}A}\circ X_{-c\ln{t}}(z)$ converges to the map $ F(z)$ as $t \to 0^{+}$ uniformly over every compact subset of $\Om$. In view of \Cref{L:composconver}, we get the existence of such a biholomorphism $F\colon \Omega \to \mathbb{C}^n$. Clearly, the biholomorphism $F$ is a uniform limit of a sequence of automorphisms of $\cn$ on the compact subsets of $\Om$. By assumption, there exists $\epsilon_{1}>0$ such that   $2\alpha+k_{-}(A)-\epsilon_{1}>0$. We choose a real number $c>\frac{1}{2\alpha+k_{-}(A)-\epsilon_{1}}>0$ and fix it.  We will first show that $H$ is a $\smoo^{1}$-map and for each fixed $t \in [0,1]$, $H_{t}(\Om)$ is Runge. Then, we will use the \Cref{res-Forstneric-Rosay}.  It is enough to show that $H$ is $\smoo^{1}$ at the points of the form $(0,z)$. Since any translation map and an invertible linear map is particularly an automorphism of $\cn$, hence, without loss of generality, we assume that $\Phi(0)=0$ and $D\Phi(0)=I_{n}$. Let $K \subset \Om$ be a compact set.
We now divide the proof into four steps.

\noindent

	%In view of \Cref{T:Forstenric-1994} we conclude the theorem.
	\smallskip
	
	\noindent{\bf Step I:}{\bfseries \boldmath ~Showing that  $H$ is continuous  at $(0,z)$.}\\
Expanding the power series expansion of the biholomorphism $\Phi: \Omega \rightarrow \mathbb{C}^n$ around the origin, we obtain  that there exist  constants $M_{1}, \delta>0$  such that $\Phi$ can be expressed as:
\begin{align}\label{E:ncon1}
    \Phi(z)& = z + R_{1}(z) \quad \text{for all} ~ z \in B(0, \delta),
\end{align}
where 
\begin{align}\label{E:n-con1}
 \|R_{1}(z)\| \leq M_{1}\|z\|^2 ~  \text{for all}~ z \in B(0, \delta).  
\end{align}

\noindent
Clearly, $-c\ln{t} \to \infty$ as $t \to 0^{+}$. Since, by the assumptions $(i)$ and $(ii)$, the flow of the vector field $V$ has uniform exponential decay on the domain $\Om$, hence,  there exists $\delta_{K}>0$ such that $X(-c\ln{t},z) \in B(0, \delta)$, for all $z \in K$ and for all $t \in (0, \delta_{K})$.  
 Hence, by \eqref{E:n-con1}, we obtain 
 \begin{align}\label{E:n-con2}
   \|R_{1}(X(-c\ln{t},z))\|&\leq M_{1}\|X(-c\ln{t},z)\|^2  ~\forall z \in K, ~\forall t \in (0, \delta_{K}).  
 \end{align}
Again, using the uniform exponential decay in assumption $(ii)$, we obtain that
\begin{align}\label{E:n-con3}
   \|X(-c\ln{t},z)\|& \leq \gamma e^{\alpha c\ln{t}}\|z\|~ \forall t>0 .
\end{align}
\noindent
Hence, from \eqref{E:n-con2} and \eqref{E:n-con3}, we get that  
\begin{align}\label{E:ncon2}
   \|R_{1}(X(-c\ln{t},z))\|  &\leq M_{1}\gamma
    ^2t^{2c\alpha}\|z\|^2 ~ \forall z \in K,~ \forall t \in (0, \delta_{K}) .
    \end{align}
\noindent
By using \Cref{L:exp} and \eqref{E:ncon2}, we deduce that for all $z \in K$ and for all $t \in (0, \delta_{K})$ the following holds:
\begin{align}\label{E:ncon3}
   \|e^{c\ln{t}A}(R_{1}(X(-c\ln{t},z)))\|& \leq \rho(\eps_{1})e^{c\ln{t}(k_{-}(A)-\eps_{1})}\|R_{1}(X(-c\ln{t},z))\| \notag\\
   &\leq M_{1}\rho(\eps_{1})\gamma
    ^2e^{c\ln{t}(k_{-}(A)-\eps_{1})} t^{2c\alpha}\|z\|^2 \notag\\
    %&= M_{1}\rho(\eps_{1})\gamma
    %^2 t^{c(k_{-}(A)+2\alpha-\eps_{1})}\|z\|^2\notag\\
    &=M't^{c(k_{-}(A)+2\alpha-\eps_{1})}.
\end{align}
Here $M':=\sup_{z \in K}M_{1}\rho(\eps_{1})\gamma
    ^2 \|z\|^2$.
Since  from \eqref{C:conditi1} we have  $k_{-}(A)+2\alpha-\eps_{1}>0$, hence, from \eqref{E:ncon3}, we conclude that $e^{c\ln{t}A}(R_{1}(X(-c\ln{t},z))) \to 0$ as $t \to 0^{+}$, uniformly $K$.
\noindent
Now for any $(t,z) \in (0, \delta_{K})\times K$ we obtain
\begin{align}\label{E:ncon5}
H(t,z)&=  e^{c\ln{t}A}(\Phi(X(-c\ln{t},z)))\notag\\
%&=e^{c\ln{t}A}\big(X(-c\ln{t},z)+R_{1}(X(-c\ln{t},z))\big) \notag\\  
&=e^{c\ln{t}A}(X(-c\ln{t},z))+e^{c\ln{t}A}\big(R_{1}(X(-c\ln{t},z))\big).
\end{align}
\noindent
Therefore,
$$\lim_{t \to 0^{+}}H(t,z)=\lim_{t \to 0^{+}}e^{c\ln{t}A}(X(-c\ln{t},z)). $$
\noindent
Since, by \Cref{L:composconver}, we obtain that $\lim_{t \to 0^{+}}e^{c\ln{t}A}\circ (X(-c\ln{t},z))=F(z)$ and the convergence is uniform on every compact subset of $\Om$, therefore, $\lim_{t \to 0^{+}}H(t,z)=F(z)=H(0,z)$. Hence, $H$ is continuous at $(0,z)$.

\medskip

\noindent{\bf Step II:}{\bfseries \boldmath ~Showing that  $H$ is differentiable  at $(0,z)$.}

\noindent
We want to show that $DH(0,z)=DF(z)$ for every $z \in \Om$. For every $t \in (0, \delta_{K})$ , $z \in K$ , and for $h$ in a small enough neighborhood $U_0$ of the origin  we get that
\begin{align*}
& \|H(t,z+h)-H(0,z)-DF(z)h\|\\
&\qquad\qquad\qquad\qquad\qquad=  \|e^{c\ln{t}A}(\Phi(X(-c\ln{t},z+h)))-F(z)-DF(z)h\|. 
\end{align*}
Using the expression of the map $\Phi$ from \eqref{E:ncon1}, we get that
 \begin{align}\label{E:n-diff1}
 & \|H(t,z+h)-H(0,z)-DF(z)h\|\notag \\
 &= \|e^{c\ln{t}A}(X(-c\ln{t},z)+R_{1}(X(-c\ln{t},z)))-F(z)-DF(z)h\|\notag\\
  &\leq \|e^{c\ln{t}A}(X(-c\ln{t},z+h))-F(z+h)\|\notag\\
  & +\|e^{c\ln{t}A}\big(R_{1}(X(-c\ln{t},z+h)\big)+\|F(z+h)-F(z)-DF(z)h\|. 
 \end{align}
Hence, from \eqref{E:n-diff1}, we obtain, for $(t,z) \in (0, \delta_{K}) \times K$ and   $h\in U_0,\; h \neq 0$, that
\begin{align}\label{E:ndiff1}
& \frac{\|H(t,z+h)-H(0,z)-DF(z)h\|}{\|(t,h)\|}\notag\\
&\leq \frac{\|e^{c\ln{t}A}(X(-c\ln{t},z+h))-F(z+h)\|}{\|(t,h)\|}+\frac{\|e^{c\ln{t}A}\big(R_{1}(X(-c\ln{t},z+h)\big)\|}{\|(t,h)\|}\notag\\
 &+\frac{\|F(z+h)-F(z)-DF(z)h\|}{\|(t,h)\|}\notag\\
 &\leq \frac{\|e^{c\ln{t}A}(X(-c\ln{t},z+h))-F(z+h)\|}{t}+\frac{\|e^{c\ln{t}A}\big(R_{1}(X(-c\ln{t},z+h)\big)\|}{t}\notag\\
 &+\frac{\|F(z+h)-F(z)-DF(z)h\|}{\|h\|}.
  \end{align}
\noindent
Next, we show that the three summands that appeared on the right-hand side of the \eqref{E:ndiff1}, tend to zero as $\|(t,h)\| \to (0,0)$ with $t>0$.
Clearly, 
\begin{align}\label{E:ndiff6}
    \frac{\|F(z+h)-F(z)-DF(z)h\|}{\|h\|} \to 0~~ \text{as} ~~\|h\|~~ \to 0.
\end{align} 
\noindent
By similar computation as \eqref{E:ncon3}, we conclude that there exists $M''>0$, such that for every  $(t,h) \in (0, 1)\times \cn$ with  $\|(t,h)\|$ is small enough, the following holds:  
\begin{align}\label{E:ndiff2}
\frac{\|e^{c\ln{t}A}\big(R_{1}(X(-c\ln{t},z+h)\big)\|}{t}&\leq M'' t^{c(2\alpha+k_{-}(A)-\eps_{1})-1},~~\forall z\in K.
\end{align}
\noindent
From the choice of $c>0$, it follows that the right-hand side of the \eqref{E:ndiff2} tends to $0$ as $\|(t,h)\| \to 0$ with $t>0$. 
We now deal with the first summand of \eqref{E:ndiff1}.
\noindent
We now view the vector field $V$ as an entire mapping such that $V(0)=0$, $DV(0)=A$. Hence, we can write 
\begin{align}\label{E:n-diff2}
    V(z)&=Az+R(z). 
\end{align}
\noindent
Define a map $S\colon [0,1] \times \Omega \to \cn $ by 
$$
S(t,z)=
\begin{cases}
	e^{c\ln{t}A}\circ X_{-c\ln{t}}(z)~\quad\text{if}~ t \in (0,1] \\
	F(z) ~\quad\text{if}~ t=0\\
\end{cases}.
 $$ 
The first summand in \eqref{E:ndiff1} is 
\begin{align}\label{E:n-diff3}
 \frac{S(t,z+h)-S(0,z+h)}{t} .
\end{align}
 It follows, from \Cref{L:composconver}, that the map $t \to S(t, z+h)$  is continuous on $[0,1]$. It is also differentiable on $(0,1)$. Hence, by using the mean value theorem in \eqref{E:n-diff3}, we obtain that 
\begin{align}\label{E:n-diff4}
 \frac{\|S(t,z+h)-S(0,z+h)\|}{t}&\leq \bigg\|\frac{dS(t,z) }{dt}\bigg|_{t=\tau}\bigg\| .
\end{align}
Here we do some computation.
For $t \in (0,1)$, we have 
\begin{align}\label{E:ndiff7}
    \frac{dS(t,z) }{dt}&=\frac{d}{dt}(e^{c\ln{t}A}\circ X_{-c\ln{t}}(z)) \notag\\
    &=\frac{c}{t}Ae^{c\ln{t}A}\big(X(-c\ln{t},z)\big)+e^{c\ln{t}A}\bigg(\frac{d X(-c\ln{t},z)}{dt}\bigg)\notag \\
    &=\frac{c}{t}Ae^{c\ln{t}A}\big(X(-c\ln{t},z)\big)-\frac{c}{t}e^{c\ln{t}A}(V(X(-c\ln{t},z))). \notag\\
    \end{align}
\noindent
Since we have $V(z)=A(z)+R(z)$, hence, from \eqref{E:ndiff7} we get 
  \begin{align}\label{E:ndiff3}
  \frac{dS(t,z) }{dt} %&= \frac{c}{t}Ae^{c\ln{t}A}\big(X(-c\ln{t},z)\big)-\frac{c}{t}e^{c\ln{t}A}(A(X(-c\ln{t},z)))-\frac{c}{t}Ae^{c\ln{t}A}\bigg(R(X(-c\ln{t},z))\bigg)  \notag \\
  &=-\frac{c}{t}Ae^{c\ln{t}A}\big(R(X(-c\ln{t},z))\big).
  \end{align}
From \eqref{E:n-diff3} and \eqref{E:ndiff3} we deduce that 

\begin{align*}
\frac{\|S(t,z+h)-S(0,z+h)\|}{t}& \leq\bigg \|\frac{dS(t,z+h) }{dt}\bigg|_{t=\tau}\bigg\|,\notag
\end{align*}
where $0<\tau<t$. Therefore, we obtain

\begin{align}\label{E:ndiff4}
\frac{\|S(t,z+h)-S(0,z+h)\|}{t}&\leq \frac{c}{\tau}\bigg\|Ae^{c\ln{\tau}A}\big(R(X(-c\ln{\tau},z+h))\big)\bigg\|.
\end{align}
\noindent
There is a neighborhood 
$U$ of the origin such that   $\|R(z)\|\leq M\|z\|^2$ for all $z \in U$. Since $X(-c\ln{t},z) \to 0$ as $t \to 0^{+}$ uniformly on every compact subset of $\Om$. Hence, from \Cref{L:exp} and the equation \eqref{E:ndiff4}, we obtain that 
\begin{align}\label{E:ndiff5}
\frac{\|S(t,z+h)-S(0,z+h)\|}{t}&\leq M\gamma^2\rho(\eps_{1}) t^{c(2\alpha+k_{-}(A)-\eps_{1})-1}\|z+h\|^2.
\end{align}
\noindent
By  the choice of $c>0$, it follows from  \eqref{E:ndiff5} that  $$\frac{\|e^{c\ln{t}A}(X(-c\ln{t},z+h))-F(z+h)\|}{t} \to 0~ \text{as}~ t \to 0^{+}~ \text{uniformly on  $K$.}$$
\noindent
Therefore, we infer from \eqref{E:ndiff5}, \eqref{E:ndiff2} and \eqref{E:ndiff6} that left-hand side of \eqref{E:ndiff1} tends to $0$ as $\|(t,h)\| \to 0$ uniformly on $K$. Consequently, it proves that the map $H$ is differentiable at $(0,z)$. 
  
\medskip
	
	\noindent{\bf Step III:}{\bfseries \boldmath ~Showing that  $H$ is continuously differentiable  at $(0,z)$.}	
\smallskip

\noindent We show that $\frac{dH(t,z) }{dt} \to 0$ uniformly over $K$ as $t \to 0^{+}.$
We now compute the derivative. 
\begin{align}\label{E:ncodiff}
 \frac{dH(t,z) }{dt}&= \frac{d}{dt}(e^{c\ln{t}A}(\Phi(X(-c\ln{t},z)))) \notag \\
% &=\frac{c}{t}Ae^{c\ln{t}A}(\Phi(X(-c\ln{t},z)))-\frac{c}{t}e^{c\ln{t}A}\big(D\Phi(X(-c\ln{t},z))V(X(-c\ln{t},z))\big)\notag\\
 &=\frac{c}{t}e^{c\ln{t}A}\big(A\Phi(X(-c\ln{t},z))-D\Phi(X(-c\ln{t},z))V(X(-c\ln{t},z))\big)\notag \\
 &=\frac{c}{t}e^{c\ln{t}A}\big(\Psi(X(-c\ln{t},z))\big).
\end{align}
Here $\Psi(z)=A(\Phi(z))-D\Phi(z)V(z)$. Clearly, $\Psi(0)=0$. We prove that $D\Psi(0)=0$. We deduce the following for all non-zero $h \in \cn$ 
\begin{align}\label{E:ncodiff1}
 \|\Psi(h)\|&=\|A(\Phi(h))-D\Phi(h)V(h)\|\notag
    \\
    %&=\|A(\Phi(h))-D\Phi(h)(Ah+R(h))\|\notag\\
    %&=\|A(\Phi(h))-Ah+A(h)-D\Phi(h)Ah-D\Phi(h)R(h))\|\notag\\
    &\leq \|A(\Phi(h))-Ah\|+\|A(h)-D\Phi(h)Ah\|+\|D\Phi(h)R(h))\|\notag\\
    &\leq \|Ah+A(R_{1}(h))-Ah\|+\|A(h)-D\Phi(h)Ah\|+\|D\Phi(h)R(h))\|\notag\\
    &\leq\|A\|\|(R_{1}(h))\|+\|I-D\Phi(h)\|\|Ah\|+\|D\Phi(h)R(h))\|.
\end{align}
\noindent 
We have $\|R_{1}(h)\|\leq M_{1}\|h\|^2$ and $\|R(h)\|\leq M\|h\|^2$  for all $h \in U \cap B(0, \delta^{'})$. Also, $D\Phi(h) \to I$ as $h \to 0$. Hence, from \eqref{E:ncodiff1}, we get that $\frac{\|\Psi(h)\|}{\|h\|}\to 0$ as $\|h\| \to 0.$ This proves that $D\Psi(0)=0$. Therefore, we conclude that there exist  $M_{2}, \delta'>0$ such that  $$\|\Psi(z)\|\leq M_{2}\|z\|^2,~~ \forall z \in B(0, \delta').$$ 
We note that  $X(-c\ln{t},z) \to 0$ as $t \to 0^{+}$ uniformly on $K$. Hence,  by invoking \Cref{L:exp}, we conclude that there exists $\delta'_{K}>0$ such that   
 
 \begin{align}
 \bigg\|\frac{dH(t,z) }{dt}\bigg\|& \leq c\rho(\eps)M_{2}\gamma^2 t^{c(2\alpha+k_{-}(A)-\eps_{1})-1}\|z\|^2 ~~\forall t \in (0, \delta_{K}'),~  \forall z \in K 
\end{align}
It follows from the choice of $\eps_{1}, c>0$ that $\bigg\|\frac{dH(t,z) }{dt}\bigg\| \to 0$ as $t \to 0^{+}$ uniformly on $K$. 
Since $H_{t}$ is holomorphic on $\Om$, hence $DH_{t}(z) \to DH_{0}(z)=DF(z)$ uniformly on $K$ as $t \to 0^{+}$. This proves that $H$ is $\smoo^{1}$-smooth map  at $(0,z)$.
 \medskip
 
\noindent{\bf Step IV:}{\bfseries \boldmath ~Showing that $H_{t}(\Om)$ is Runge for $t \in [0,1]$.}
\smallskip

\noindent    Since both $X_{-c\ln t}$  and $X_{c\ln t}$ are automorphisms of $\cn$,  it follows from Lemma~\ref{lem-Runge-composition}, that $H_t(\Omega)$ is Runge. Hence, we are done.
\end{proof} 

 \begin{proof}[Proof of \Cref{thm-pro}]
 Define a vector field $V$ on $\cplx^{m+n}$ by 
 $$
 V(z,w):=(V_{1}(z),V_{2}(w)) \quad (z,w) \in \mathbb{C}^{n} \times  \mathbb{C}^{m}.
 $$
	Let $X_{1}(t,z)$ and $X_{2}(t,w)$ be the  flows of the vector field $V_{1}$ and $V_{2}$ respectively.
	Then $\theta(t, (z,w))=(X_{1}(t,z),X_{2}(t,w))$ is the flow of the vector field $V\in \mathfrak{X}_{\mathcal{O}}(\mathbb{C}^{n+m})$.  Clearly, if $(z,w) \in \Om_{1} \times \Om_{2}$, then $\theta_{t}(z,w) \in \Om_{1} \times \Om_{2}$ for every $t \geq 0$. Hence, $\Om_{1} \times \Om_{2}$ is invariant under the positive time flow of the  vector field $V$.
	
From the assumption we have that 	
\begin{align*}\label{E:prod}
2\alpha_{1}+k_{-}(A)&>0 \notag\\
  2\alpha_{2}+k_{-}(B)&>0. 
  \end{align*}
Here $A=DV_{1}(0)$ and $B=DV_{2}(0)$.
  
  Therefore, we obtain
  \begin{align}
	\|X_{1}(t,z)\|^2+\|X_{2}(t,w)\|^2& \leq \gamma_{1}^{2} e^{-2\alpha_{1}t}\|z\|^2+\gamma_{2}^{2} e^{-2\alpha_{2}t}\|w\|^2\notag\\
	\|\theta(t,(z,w))\|^2&\leq \tilde{\gamma}^{2}e^{-2\alpha t}\|(z,w)\|^2\notag \\
 \|\theta(t,(z,w))\|&\leq \tilde{\gamma}e^{-\alpha t}\|(z,w)\|,
	\end{align}
where $\tilde{\gamma}:=\max \{\gamma_{1}, \gamma_{2}\}$ and $\alpha:=\min\{\alpha_{1}, \alpha_{2}\}$.
 From the assumption we have $\alpha+\min\{\frac{k_{-}(A)}{2}, \frac{k_{-}(B)}{2}\}>0$. 

 Therefore,  the rest of the proof follows from   \Cref{thm-apprxges}.
	\end{proof}
  
 \begin{proof}[Proof of \Cref{thm-Volsympapprox}]
 
 {\sf(i) } Suppose that $\Phi : \Om \to \Phi(\Om)$ is a biholomorphism with $\text{det}D\Phi(z) \equiv 1$ and $\Phi(\Om)$ is Runge domain. According to our assumption, $div_{\omega}(V)$ is a non-zero  constant.  Let $div_{\omega}(V)=a \neq 0$. By   \Cref{vloume preserve flow}, we get that
			\begin{align}
			 \frac{d}{dt}\det D_{z}(X(t,z))&=a \det~ D_{z}(X(t,z)),~\text{det}D_{z}(X(0,z))=1,\notag
			\end{align}
where $D_{z}(X(t,z))$ denotes the derivative matrix of the automorphism $X_{t} \in \text{Aut}(\cn)$ with respect to $z$. Solving the above differential equation, we obtain that
 \begin{align}\label{E:vol2}
	\text{det} D_{z}(X(t,z))&=e^{at},  \;\; \forall z \in \cn,\;\; \forall t \geq 0.
\end{align}
  Since $\det D_{z}X_{t}(0)=e^{tr(A)t}$, where $A=DV(0)$, hence, from \eqref{E:vol2}, we get that  $a=tr(A)$. We also get from \eqref{E:isononl}
	\begin{align}
		D_{z}H_{t}(z)=e^{c\ln{t}A}\circ D_{z}(\Phi(X(-\ln{t},z))) \circ D_{z}X_{-\ln{t}}(z).\notag
	%\impl & \det D_{z}H_{t}(z)	=\det(D_{z}(X_{\ln{t}})(\Phi(X(\ln{t},z)))))\det(D_{z}(\Phi(X(-\ln{t},z))))\det( D_{z}X_{-\ln{t}}(z)).\notag
 \end{align}
	Since $\text{det}(D\Phi(X(-c\ln{t},z)))\equiv 1$, hence, from \eqref{E:vol2} we get that
 $$
	\det D_{z}H_{t}(z)=e^{tr(A)\ln{t}-tr(A)\ln{t}}=1,~~\forall t \in(0,1).
 $$
 Since $H_{t}(z) \to F(z)$ as $t \to 0^{+}$, hence, $\text{det}H_{t}(z) \to \text{det}F(z)$ as $t\to 0^{+}. $ Therefore, $ \text{det}F(z) \equiv 1$ on $\Om$.
 \smallskip
 
 \noindent We now show that identity map $\iota_{\Omega}: \Om \to \Om$ is homotopic equivalent to a constant map. Define  $ \widetilde{H}: [0,1] \times \Om \to \cn$  by 
   $$
   \widetilde{H}(t,z)=
			\begin{cases}
				X(-\ln{t},z),~\text{if}~ t \in (0,1]\\
				0, ~\text{if}~ t=0.\\
			\end{cases}.
   $$
 The global exponential growth of the flow map provides the continuity of the map $\widetilde{H}(t,z)$ at the point $(0,z)$.   This proves that the domain $\Om$ is contractible to $0$. Hence, $H^{(n-1)}(\Om, \mathbb{C})$ vanishes identically. Since $\Om$ is also pseudoconvex,  therefore,
			by \Cref{res-Forstneric-Rosay}, we conclude that $H_{1}(z)=\Phi(z)$ can be approximated by $ \text{Aut}_{1}(\cn)$ uniformly on every compact subset of $\Om$.
   \medskip
   
{\sf (ii)} The flow of the symplectic vector field preserves the symplectic form. From our assumption $\Phi:\Om \to \Phi(\Om)$ is a symplectic holomorphic map. 
Hence, $H_{t}^{*}(\omega)=\omega$ for all $t \in (0,1]$. We have seen in the proof of {\sf{(i)}} that $\Om$ is contractible. Hence, $H^{(n-1)}(\Om, \mathbb{C})=0$. We get that $H_{t}(\Om)$ is Runge for every $t \in [0,1]$, from Step IV of the proof of \Cref{thm-apprxges}. Now using  \cite[Proposition 2.3]{symfor1995} we obtain our theorem.
 
  \end{proof}

		\section{Application in Loewner PDE}\label{sec-Application}
		In this section first we state a generalization of 
 \cite[Theorem 3.4]{ABFW13} and \cite[Theorem 4.5]{Hamada15}.  
	
  \begin{theorem}\label{T: union of Runge stein domain}
			Let $\Omega$ be a complex manifold of dimension $N \geq 2$. Let $\{\Omega_{j}\}_{j \in \mathbb{N}\cup \{0\}}$ be a sequence of open connected subset of $\Omega$ such that $\Omega_{j} \subset \Omega_{j+1}$ for every $j \in \mathbb{N}$ and $\Omega=\cup_{j \in \mathbb{N}\cup \{0\}}\Omega_j$. Assume that \begin{enumerate}
				\item [(i)]
				each pair $(\Omega_{j},\Omega_{j+1})$ is a Runge pair.
				\item [(ii)]
				each $\Omega_{j}$ is biholomorphic to a Stein domain 
    that is invariant under positive time flows of a vector field $V_{j} \in \mathfrak{X}_{\mathcal{O}}(\mathbb{C}^{n})$ satisfying the conditions of \Cref{thm-apprxges}.
			\end{enumerate}Then $\Omega$ is biholomorphic to a Runge and Stein domain in $\mathbb{C}^{n}$.
		\end{theorem}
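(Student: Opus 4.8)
The plan is to imitate the proof of \cite[Theorem 3.4]{ABFW13} and \cite[Theorem 4.5]{Hamada15}, using our \Cref{thm-apprxges} in place of the corresponding approximation theorems \Cref{thm-AndersenLemp} and \Cref{thm-hamada2}. Fix a point $p_{0}\in\Omega_{0}$ and a normal exhaustion of $\Omega$ by $\hol(\Omega)$-convex compacts $K_{0}\subset K_{1}\subset\cdots$ with $K_{j}\subset\Omega_{j}$ (after passing to a subsequence of $\{\Omega_{j}\}$, which preserves the Runge-pair property by transitivity of density). I would construct inductively biholomorphisms $F_{j}\colon\Omega_{j}\to\cn$ onto Runge domains, together with a rapidly decreasing sequence $\eps_{j}>0$, such that $\|F_{j+1}-F_{j}\|_{K_{j}}<\eps_{j}$ and the images nest so tightly that $F:=\lim_{j}F_{j}$ exists locally uniformly on $\Omega$, is a biholomorphism onto its image, and $F(\Omega)=\bigcup_{j}F_{j}(\Omega_{j})$ up to the standard bookkeeping. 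The base case is immediate: condition (ii) gives a biholomorphism $F_{0}\colon\Omega_{0}\to S_{0}$ onto a spirallike Stein domain, and $S_{0}$ is Runge by \Cref{thm-spirallikeRunge1}.

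For the inductive step, given $F_{j}\colon\Omega_{j}\to\Omega_{j}':=F_{j}(\Omega_{j})$ with $\Omega_{j}'$ Runge, let $\phi_{j+1}\colon\Omega_{j+1}\to S_{j+1}$ be the biholomorphism of condition (ii) and put $W:=\phi_{j+1}(\Omega_{j})\subset S_{j+1}$ and $g:=F_{j}\circ\phi_{j+1}^{-1}\colon W\to\Omega_{j}'$. Since $(\Omega_{j},\Omega_{j+1})$ is a Runge pair so is $(W,S_{j+1})$, and since $S_{j+1}$ is itself Runge in $\cn$ by \Cref{thm-spirallikeRunge1}, a diagonal argument like that in \Cref{lem-Runge-composition} shows that $W$ is Runge in $\cn$; thus $g$ is a biholomorphism between Runge domains of $\cn$. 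The whole step then reduces to the following claim: for every compact $L\subset W$ and every $\eps>0$ there is a biholomorphism $G\colon S_{j+1}\to\cn$ onto a Runge domain with $\|G-g\|_{L}<\eps$. Indeed, applying the claim with $L=\phi_{j+1}(K_{j})$ and $\eps=\eps_{j}$ and setting $F_{j+1}:=G\circ\phi_{j+1}$ gives a biholomorphism onto the Runge domain $G(S_{j+1})$ with $\|F_{j+1}-F_{j}\|_{K_{j}}=\|G-g\|_{\phi_{j+1}(K_{j})}<\eps_{j}$, and a further shrinking of $\eps_{j}$ arranges the nesting needed for convergence.

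Proving this claim is, I expect, the main obstacle. The idea is to run the flow argument of \Cref{thm-apprxges} and \Cref{L:composconver} on $S_{j+1}$ itself. Using that $(W,S_{j+1})$ is a Runge pair, first approximate $g$ uniformly on $L$ by a holomorphic map $\widetilde g\colon S_{j+1}\to\cn$ that is biholomorphic near $L$; since $X(t,S_{j+1})\subseteq S_{j+1}$ for $t\geq 0$, the maps $e^{c\ln t\,A}\circ\widetilde g\circ X_{-c\ln t}$ (with $A=DV_{j+1}$ at the equilibrium and $c>0$ chosen as in the proof of \Cref{thm-apprxges}) form, for $t\in(0,1]$, a family of holomorphic self-maps of $S_{j+1}$ converging as $t\to 0^{+}$ to a limit that, near $L$, is biholomorphic and is a locally uniform limit of automorphisms of $\cn$; each flow image $X_{-c\ln t}(S_{j+1})$ is again spirallike, hence Runge, so by \Cref{lem-Runge-composition} the images of the members of this family are Runge. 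One then repairs injectivity and applies the Forstneri\v{c}--Rosay theorem (\Cref{res-Forstneric-Rosay}) on $S_{j+1}$ to replace the family by an isotopy of automorphisms, producing the required $G$. The delicate points — handled exactly as in \cite[Theorem 3.4]{ABFW13} and \cite[Theorem 4.5]{Hamada15} — are the placement of the equilibrium point of $V_{j+1}$ relative to $W$ (which may require refining the exhaustion and invoking the remark that our theorems persist for nonzero equilibria), the fact that the natural family is only biholomorphic near $L$ rather than on all of $S_{j+1}$ (forcing a secondary exhaustion of $S_{j+1}$ by $\hol$-convex compacts, with \Cref{P: global asymptotic property} guaranteeing the flow sweeps each into $W$), and the propagation of Rungeness at every stage via \Cref{lem-Runge-composition}.

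Finally, once $F=\lim_{j}F_{j}$ is constructed, $F\colon\Omega\to F(\Omega)$ is a biholomorphism and $F(\Omega)=\bigcup_{j}\Omega_{j}'$, where each $\Omega_{j}'\cong\Omega_{j}\cong S_{j}$ is Stein and Runge in $\cn$ and each pair $(\Omega_{j}',\Omega_{j+1}')$ is Runge. Hence $F(\Omega)$ is an increasing union of Stein open sets with Runge pairs, so it is Stein (Behnke--Stein / Docquier--Grauert), and polynomials are dense in $\hol(F(\Omega))$ by a diagonal argument as in \Cref{lem-Runge-composition}, i.e. $F(\Omega)$ is Runge. Therefore $\Omega$ is biholomorphic to a Runge and Stein domain of $\cn$, as claimed.
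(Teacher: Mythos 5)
Your overall inductive scheme --- constructing biholomorphisms $F_j\colon\Omega_j\to\cn$ onto Runge domains with $\|F_{j+1}-F_j\|_{K_j}<\eps_j$ and then passing to the limit, with Stein-ness and Rungeness of the limit handled by the increasing-union argument --- is exactly the strategy of \cite[Theorem~3.4]{ABFW13} that the paper says to follow, and the reduction to the displayed claim about $G\colon S_{j+1}\to\cn$ is the correct one. The gap is in your proof of that claim. You extend $g$ to a holomorphic $\widetilde g\colon S_{j+1}\to\cn$ that is biholomorphic only near $L$, and then try to run the isotopy $H_t=e^{c\ln t\,A}\circ\widetilde g\circ X_{-c\ln t}$ and invoke \Cref{res-Forstneric-Rosay}. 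But that result requires every $H_t$ to be a biholomorphism of $S_{j+1}$ onto a Runge domain; since $\widetilde g$ need not be injective away from $L$ (and its image need not be Runge), the $H_t$ do not satisfy this hypothesis, and ``one then repairs injectivity'' is not a step one gets for free --- there is no general device that promotes a locally biholomorphic map holomorphic on all of $S_{j+1}$ to a globally injective one while keeping the required closeness. (Also, the $H_t$ are not self-maps of $S_{j+1}$, as the conjugation pushes the image out of $S_{j+1}$ in general.)

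The claim has a much more direct proof that does not re-open the flow machinery at all, and this is what ``follows exactly as in ABFW13'' should mean. Strengthen the induction so that $F_j=\Gamma_{j-1}\circ\phi_j$ with $\Gamma_{j-1}\in\text{Aut}(\cn)$ (and $\Gamma_{-1}=\mathrm{id}$). Then $g=\Gamma_{j-1}\circ(\phi_j\circ\phi_{j+1}^{-1})$ on $W$, and to approximate $g$ on $L$ by an automorphism it suffices to approximate $\phi_j\circ\phi_{j+1}^{-1}\colon W\to S_j$ on $L$. Its inverse $\phi_{j+1}\circ\phi_j^{-1}\colon S_j\to W$ has spirallike source satisfying the hypotheses of \Cref{thm-apprxges} and Runge target $W$, hence is approximable by elements of $\text{Aut}(\cn)$ locally uniformly on $S_j$; a standard Rouch\'e-type argument (\Cref{R:Lloyd}) then shows that the inverses of these automorphisms approximate $\phi_j\circ\phi_{j+1}^{-1}$ locally uniformly on $W$. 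Set $\Gamma_j$ to be $\Gamma_{j-1}$ composed with such an automorphism, $G:=\Gamma_j|_{S_{j+1}}$ and $F_{j+1}:=\Gamma_j\circ\phi_{j+1}$; all the delicate points you listed (placement of equilibria, Rungeness of images) are then either absorbed into the black-box use of \Cref{thm-apprxges} or become immediate since $\Gamma_j(S_{j+1})$ is the automorphic image of a Runge domain.
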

	\noindent Since we have the approximation on certain domains satisfying conditions  \Cref{thm-apprxges}, \Cref{thm-linear case}, hence, the proof of the above theorem follows  exactly the same way as \cite[Theorem 3.4]{ABFW13}. Therefore, we omit the proof here.	
\smallskip
  
  We now give proof of \Cref{T:Loewner Pde}.

  \begin{proof}[Proof of Theorem~\ref{T:Loewner Pde}]
			Using \Cref{T: Abstruct approach to Loe.} we get that there exists a Loewner chain $g_{t}: \Omega \to N$ of order $d$ that solves the Loewner PDE 
			\begin{align}\label{E:LPDE1}
				\frac{\partial f_{t}}{\partial t}(z)&=-df_{t}(z)(G(z,t)), ~~a.e.\quad t \geq 0,~~\forall z \in \Omega,
			\end{align}
			where $N=\bigcup_{t \geq 0} g_{t}(\Om)$ is a complex manifold of dimension $n$. We also obtain that any other solution to equation \eqref{E:LPDE1} with values in another $n$ dimensional complex manifold $Q$ is of the form $\Lambda \circ g_{t}$, where $\Lambda : N \to Q$ is holomorphic. By using \Cref{T:rungepair}, $\left(g_{j}(\Omega),g_{j+1}(\Omega)\right)$ is a Runge pair for every $j \in \mathbb{N}$. Since  $g_{t}$ is a univalent holomorphic map,  $g_{t}(\Omega)$ is an open connected subset of $N$. Clearly, $g_{t}(\Om)$ is biholomorphic to $\Om$. By assumption, $\Om$ is invariant under positive time flows of $V$ such that $V$ satisfies the  \eqref{C:conditi1}. Therefore, $g_{j}(\Om)$ satisfies  the Condition~$(ii)$ of \Cref{T: union of Runge stein domain} for all $j \in \mathbb{N}$. Hence,  thanks to \Cref{T: union of Runge stein domain}, we get that  $N$ is biholomorphic to some Runge Stein domain  $G \subset \mathbb{C}^{n}$. Hence, it follows that there exists a biholomorphic map $F: N \to G$.  Let  $f_{t}=F \circ g_{t}$.  We have the following 
			\begin{align}
				\frac{\partial f_{t}}{\partial t}(z)&=DF(g_{t}(z))	\frac{\partial g_{t}}{\partial t}(z)\notag\\
				&=DF(g_{t}(z))(-dg_{t}(z)(G(z,t)))\notag\\
				%&=-DF(g_{t}(z))(dg_{t}(z)(G(z,t)))\notag\\
				&=-D(F(g_{t}(z)))G((z,t))\notag \\
				&=-D(f_{t}(z))(G(z,t)).\notag
			\end{align}  
			This implies that $f_{t}$ satisfies \eqref{E:LPDE1}. If $\tilde{g_{t}}(z)$ is another solution of the  (\ref{E:LPDE1}) with values in $\cn$, then we deduce, from Theorem~\ref{T: Abstruct approach to Loe.}, that there is a biholomorphism $\Lambda:N \to \cup_{t \geq 0}\tilde{g_{t}}(\Omega)$ such that $\tilde{g_{t}}=\Lambda \circ g_{t}=\Lambda \circ F^{-1}(f_{t})$.  Consider $\vphi=\Lambda \circ F^{-1} $. Therefore, any solution of \eqref{E:LPDE1}  with values in $\mathbb{C}^{n}$ is of the form $\vphi(f_{t})$ for some suitable biholomorphism $\vphi\colon G \to \cn$.
		\end{proof}

		% SECTION-EXAMPLES
\section{Examples}\label{sec-example}

In this section, we provide a some examples of domains that satisfy the conditions of our theorems. 
\begin{example}
Let $A=\text{Diag}(-\lambda_{1}, -\lambda_{2}, \cdots ,-\lambda_{n})$, such that $\lambda_{i} \in \mathbb{N}$, with, $\lambda_{1} \leq \lambda_{2} \cdots \leq \lambda_{n}$ and $2\lambda_{1}>\lambda_{n}$. Let $i, j \in \{1,2, \cdots ,n\}$ and $m_{i}, m_{j} \in \mathbb{N}$, such that $m_{i}\lambda_{i}=m_{j}\lambda_{j}$. Let $\Om:=\{z \in \cn: \big|z_{i}^{m_{i}}-z_{j}^{m_{j}}\big|<1\}$. Then every biholomorphism from $\Om$ onto a Runge domain can be approximated by elements of $\text{Aut}(\cn)$.
\end{example}	
\noindent {\bf Explanation.}
Let $z \in \Om$ and $w=e^{tA}z$. Then we obtain that 
\[
|w_{i}^{m_{i}}-w_{j}^{m_{j}}| =|e^{-m_{i}\lambda_{i}t}z_{i}-e^{-m_{j}\lambda_{j}t}z_{j}|=e^{-m_{i}\lambda_{i}}|z_{i}-z_{j}|<1.
\]
 Since $2\lambda_{1}>\lambda_{n}$, hence, it satisfies the condition of the  \Cref{thm-linear case}. Therefore, the conclusion is true.

The next example demonstrates a class of domains that satisfy the conditions of \Cref{thm-linear case} but do not satisy the conditions of \Cref{thm-hamada2}.
\begin{example}\label{ex-outside}
Let $A=\lambda I_{n}+\mathcal{N}$,  where $\mathcal{N}$ is a $n \times n$ matrix with ones in the first diagonal above the main diagonal and zeros elsewhere and $\lambda=\lambda_{1}+i\mu_{1}$, with $\lambda_{1}<0$. Let 
$\Om:=\{(z_{1}, z_{2}, ,\ldots ,z_{n}): \bigg|z_{n-1}-\frac{z_{n}}{\lambda_{1}}\ln{|z_{n}|}\bigg|<1\}$. Then every biholomorphism from $\Om$ onto a Runge domain can be approximated by elements of $\text{Aut}(\cn)$.
\end{example}

\noindent {\bf Explanation.}
Suppose that  $A=-\lambda I_{n}+\mathcal{N}$. Let $w=e^{At}z$. We get that $$w=e^{At}=e^{\lambda_{1}t}
\begin{bmatrix}
1 & t &  \frac{t^{2}}{2} & \cdots & &\frac{t^{n}}{n!}
\\
0& 1& t& \cdots &	 &\frac{t^{n-1}}{(n-1)!}\\
\vdots & \vdots & \vdots&\ddots &&\vdots\\
0&0&0&\cdots&  &t\\
0& 0&0&\cdots &&1 \\
\end{bmatrix}.$$
For all $z \in \Om$ and for all $t \geq 0$,  we obtain that 
\begin{align*}
	\bigg|w_{n-1}-\frac{w_{n}}{\lambda_{1}}\ln{|w_{n}|}\bigg|& =\bigg|z_{n-1}e^{\lambda t}+tz_{n}e^{\lambda t}-\frac{z_{n}e^{\lambda t}}{\lambda_{1}}(\ln{|z_{n}|}+t\lambda_{1})\bigg| \\
	&=\bigg|z_{n-1}e^{\lambda t}-e^{\lambda t}\frac{z_{n}}{\lambda_{1}}\ln{|z_{n}|}\bigg|\\
&=e^{\lambda_{1} t}	\bigg|z_{n-1}-\frac{z_{n}}{\lambda_{1}}\ln{|z_{n}|}\bigg|\\
&<1.
\end{align*}
 Clearly, the matrix $A$ satisfies 
$2k_{+}(A)=2\lambda_1<\lambda_1$. 
Therefore, from \Cref{thm-linear case} it follows that any biholomorphism of $\Om$, whose image is a Runge domain in $\cn$, can be approximated by elements of $\text{Aut}(\cn)$.
But the matrix $-A=-\lambda I_{n}-\mathcal{N}$ may not  satisfy the condition of \Cref{thm-hamada2}. Since for any $B \in M_{n}(\cplx)$ we have that $m(B):=\inf\{\rl \langle Bz, z\rangle: \|z\|=1\}=\min\{\rl (\lambda): \lambda \in \sigma (\frac{1}{2}(B+B^{*}))\}$, hence for $n=3$ $m(A)=-\lambda_{1}-\sqrt{2}$ and $k_{+}(A)=-\lambda_{1}$. Therefore, for $n=3$ the matrix satisfies the condition given in \Cref{thm-hamada2} if and only if $\lambda_{1}<-2\sqrt{2}$.

\begin{example}
Let $D \subset \cn$ be a domain such that $D$ is invariant under the positive time flow of the vector field $V \in  \mathfrak{X}_{\mathcal{O}}(\cn)$. Assume that $V$ satisfies the condition of \Cref{thm-apprxges}. Let $\Om =\{(z,w) \in \mathbb{C}^{n+1}:z \in D,~ \|w\|<1\} \subset \mathbb{C}^{n+1}$. Then $\Om$ is invariant under positive time flows of the vector field $\widetilde{V}=(V(z),-\beta w)$, where $\beta >0$ is chosen suitably so that the vector field  $\widetilde{V}$ satisfies the condition of \Cref{thm-apprxges}.
\end{example}

  \noindent {\bf Explanation.}
	The flow of the vector field $\tilde{V}$ is $\tilde {X}(t,(z,w))=(X(t,z), e^{-\beta t}w )$, where $X(t,z)$ denotes the flow of the vector field $V$.
Clearly, $\tilde{X}_{t}(\Om) \subset \Om$ for all $t>0$. Here $\tilde{X}_{t}$ is the flow of the vector field  $\widetilde{V}=(V(z),-\beta w)$ and $\beta >0$ will be chosen later. Let $DV(0)=A$.    Since the flow of the vector field $V$ satisfies the condition of the \Cref{thm-apprxges}, hence, for all $z=(z_{1}, z_{2}) \in D$, there exist $\gamma,  \alpha >0$, such that 
\begin{align}\label{E:exam}
	\|X(t,z)\| &\leq \gamma e^{-\alpha t}\|z\|,\;\;\forall z \in D\;\; \forall t>0,
 \end{align}
 and $\alpha >\frac{-k_{-}(A)}{2}$.  Choose $\beta>0$  such that $$\min\{\alpha, -k_{-}(A)\}>\beta >\frac{-k_{-}(A)}{2}.$$ For $(z,w) \in \Om$ and $t>0$, we have the following  estimate:  
\begin{align}
	\|\tilde{X}(t,(z,w))\|^2 & = \|X(t,z)\|^2+e^{-2\beta t}\|w\|^2 .\notag\\
	\intertext{Hence, from \eqref{E:exam} we get that  }
\|\tilde{X}(t,(z,w))\|^2	&\leq \tilde{\gamma}^2 e^{-2\beta t}(\|z\|^2+ \|w\|^2), \notag\\
\intertext{where 
 $\tilde{\gamma}^{2}:=\max\{1, \gamma^2\}.$ Therefore, for every $(z,w) \neq (0,0)$, we obtain that }
\|\tilde{X}(t,(z,w))\|	 &\leq \tilde{\gamma}e^{-\beta t}\|(z,w)\|. \notag
\end{align}
 Clearly, from the choice of $\beta>0$ we have  $k_{-}(D\tilde{V}(0))=\min\{k_{-}(A), -\beta\}=k_{-}(A)$. Again from the  choice of $\beta>0$, we have $2\beta+k_{-}(D\widetilde{V}(0))>0$.
Therefore, it satisfies Condition~\eqref{C:conditi1} of \Cref{thm-apprxges}.

		Next, we provide an example of a complete hyperbolic Hartogs domain which is invariant in the positive time flow of a  nonlinear vector field but not invariant  under the flow of any matrix. More precisely:
  
		\begin{example}\label{T:Example of Domain}
			
Consider the domain  $
  \Omega=\{(z_1,z_2)\in \mathbb{C}^{2}:~\rl(z_{1})<3,~|z_2|<e^{-\frac{1}{2}\rl(z_{1})})\} $. $\Om$ is a complete hyperbolic Hartogs domain which is invariant under positive time flows of a non-linear holomorphic vector field but not invariant under positive time flows of any linear holomorphic vector field.
\end{example}

  \noindent {\bf Explanation:}
We first show that $\Omega$ is invariant under positive time flows of the vector field $V(z_1,z_2)=(-2z_1,-3z_2+z_1z_2)$.
The flow of the vector field $V$ is 
   \[
   X(t,z)= \left(z_1e^{-2t},z_2e^{-3t}e^{\frac{z_1}{2}(1-e^{-2t})}\right), \;\;\forall t\in \rea.
   \]
	Let $(z_1,z_2) \in \Omega$ and $t \geq 0$. Let $w_{1}=z_{1}e^{-2t}$ and $w_2=z_2e^{-3t}e^{\frac{z_1}{2}(1-e^{-2t})}$. Then we have:
			\begin{align*}
				|w_2|&= |z_2|e^{-3t}||e^{\frac{z_1}{2}(1-e^{-2t})}|\\
				&\leq 	|z_2||e^{\frac{z_1}{2}(1-e^{-2t})}|.
    \end{align*}
				Since $(z_1,z_2)\in \Omega$, we get that
    \begin{align*}
				|w_{2}|	&\leq  e^{-\frac{1}{2}\rl(z_{1})} e^{\frac{1}{2}\rl(z_{1})(1-e^{-2t})}\\
				%&< e^{-\frac{1}{2}e^{-2t}\rl(z_1)}\\
				&\leq e^{-\frac{1}{2}\rl(z_{1}e^{-2t})}\\
				&< e^{-\frac{1}{2}\rl(w_{1})}.		
    \end{align*}
			It is clear that $\rl(w_{1})<3$. Therefore, we get  $(w_1,w_2)\in \Omega $ for all $t \geq 0$. 
   \smallskip
   
 \noindent Next, we claim that $\Om$ is complete hyperbolic. We use  \Cref{T:Do duc thai} to prove this lemma. Consider the sequence  $c_{j}=\frac{1}{2}$  and $h_{j}(z)=e^{z}$ for $j \in \mathbb{N}$. Then $\{c_{j}\log|{h_{j}}(z)|\}$ converges to $\frac{1}{2}\rl(z)$. Since upper half space $H:=\{z\in \mathbb{C}:\Im(z)>0\}$ is biholomorphic to a domain $G:=\{z\in \mathbb{C}:\rl(z)<3\} \subset \mathbb{C}$. Hence, $G$ is a complete hyperbolic domain. We now use \Cref{T:Do duc thai} to  conclude that   $\Omega$ is a complete hyperbolic domain. Next, we show that $\Om$ satisfies the conditions of \Cref{thm-apprxges}. We show that the flow of the vector field $V$ satisfies Condition~\eqref{C:conditi1}. Let $\zeta=(\zeta_{1}, \zeta_{2}) \in \Om$. We have the estimate of the flow on the domain $\Om$ 
			\begin{align*}
				\|X(t,\zeta)\|^{2}& =e^{-4t}\|\zeta_{1}\|^2+\|\zeta_{2}\|^2e^{-6t}e^{\rl(\zeta_{1})(1-e^{-2t})}.
    \end{align*}
    Here we have $\rl(\zeta_{1})<3$. We also get that there exists $M>0$ such that 
   $-2t+(1-e^{-2t})\rl{\zeta_1}<1$ for all $t >M$   . Let $s=\sup_{t \in [0, M]}-2t+3(1-e^{-2t})$ Therefore, we have 
   \begin{align*}
      \|X(t,\zeta)\|^2 &\leq \gamma^2 e^{-4t}\big(\|\zeta_{1}\|^2+\|\zeta_{2}\|^2 \big)\\
      \|X(t,\zeta)\|  &\leq  \gamma e^{-2t}\|\zeta\|,
   \end{align*}
where $\gamma^{2}:=\max\{1, s\}$.				

Here $\alpha=2$ and $k_{-}(DF(0))=-3$. Hence, $2\alpha+k_{-}(DF(0))>0$.  Hence, $\Om$ satisfies \eqref{C:conditi1}.	Therefore. in view of Theorem~\ref{thm-spirallikeRunge1}, any biholomorphism of $\Om$ can be approximated uniformly on compacts by Aut$(\CC)$.
\smallskip

\noindent It can be shown that $\Om$ is not invariant under positive time flows of any linear holomorphic vector field. It is clear that if all the eigenvalues of $A$ have positive real parts then a domain $\Om$ containing the origin  satisfies $e^{tA}\Om \subset \Om$ for all $t>0$ if and only if  $\Om=\mathbb{C}^{n}$.  For   given a matrix $A \in GL(n , \cplx)$, all of whose eigenvalues have negative real part, we construct an initial point $z=(z_{1}, z_{2}) \in \Om$  and $t>0$ (depending on $A$) such that $e^{tA}z \notin \Om$. We provide the main steps here.

\smallskip

\noindent
\textbf{Step I:}
\textbf{Showing that $\Om$ is not invariant under positive time flows of the holomorphic linear vector fields corresponding to any diagonal matrix:}  

\noindent
In this case, we may assume that $A=\begin{bmatrix}
						d_{1} & 0 \\
						0 & d_2 \\
			\end{bmatrix}$ with $d_{1}=a_{1}+ib_{1}$ and $d_{2}=a_{2}+ib_{2}$ and $a_{j}, b_{j} \in \mathbb{R}$ for $j=\{1, 2\}$. If $b_{1} \neq 0$, then we choose $(-2p, 1) \in \Om$ with $\frac{a_{2}\pi}{|b_{1}|}+p e^{\frac{a_{1}\pi}{|b_{1}|}}>0$ and $t=\frac{\pi}{|b_{1}|}$. If $b_{1}=0$ then the choice of the initial point and the time depends on the sign of $a_{1}$ as follows: If $a_{1}<0$, then we choose the initial point $z=(-2p,z_{2})$ with $e^{p}>z_{2}>e^{pe^{a_{1}}+|a_{2}|}$ and $p=\frac{2|d_{2}|}{1-e^{a_{1}}}$ and $t_{1}=1$. If $a_{1}>0$ then we take $z=(2,\frac{1}{e^{2}}) \in \Omega$ and $t>0$ with $e^{a_{1}t}+a_{2}t-2>0$.    
\medskip

\noindent
   \textbf{Step II:}
\textbf{Showing that $\Om$ is not invariant under positive time flows of the holomorphic linear vector fields corresponding to any matrix of the form $\begin{bmatrix}
    \lambda & 1\\
    0  & \lambda
\end{bmatrix}$:}  
Here we consider the matrix of the form
 $\begin{bmatrix}
    \lambda & 1\\
    0  & \lambda
\end{bmatrix}$, $\lambda=\lambda_{1}+i\lambda_{2}$. In case, $\lambda_{2} \neq 0$, we choose $z=(2p,-1)$ with $	\frac{\lambda_{1} \pi}{|\lambda_{2}|}-\frac{\pi e^{\frac{\pi \lambda_1}{|\lambda_{2}|}}}{2|\lambda_{2}|}+pe^{\frac{\lambda_{1} \pi}{|\lambda_{2}|}} >0 $ and $t=\frac{\pi}{|\lambda_{2}|}$. If $\lambda_{2}=0$ then the choice of the initial point and the time depends on the sign of $\lambda_{1}$. If $\lambda_{1}<0$ we take $(-2te^{-\lambda_{1}t},2e^{-\lambda_{1}t}) \in \Om$ such that $t>0$ and $2e^{t(k-e^{kt})}<1$. 

\medskip

\noindent
If $A$ is similar to a diagonal matrix or a Jordan block then also we can compute the initial point inside the domain and the positive exit time of the integral curve. Since  the computations are lengthy and of similar nature as  Step I and II above, we omit the computation here.

\smallskip

Next, we provide an example of a domain in $\cn$ similar to previous example. 
 \begin{example}
		Let $\Om =\{(z_{1}, \ldots ,z_{n}) \in \cn: \rl(z_{1})<3,~ |z_{j}|<e^{-\frac{1}{2}\rl(z_{1})}, 2\leq j\leq n\}$. Then $\Om$ satisfies the condition of \Cref{thm-apprxges} with respect to the vector field $V(z_{1},z_{2}, \ldots ,z_{n})=(-2z_{1},-3z_{2}+z_{1}z_{2},-3z_{3}+z_{3}z_{1}, \ldots , -3z_{n}+z_{n}z_{1})$. It can be proved, in similar lines to the proof of \Cref{T:Example of Domain}. 
	\end{example}

		\medskip
		
		\noindent {\bf Acknowledgements.} 
		Sanjoy Chatterjee is  supported by CSIR fellowship (File No-09/921(0283)/2019-EMR-I) and also would like to thank Golam Mostafa Mondal and Amar Deep Sarkar for several discussions. Sushil Gorai is partially supported by a Core Research Grant (CRG/2022/003560) of SERB, Dept. of Science and Technology, Govt. of India.

		\bibliographystyle{plain}
	\end{document}